\newtheorem{theorem}{Theorem}[section]
\newtheorem{lemma}[theorem]{Lemma}
\newtheorem{corollary}[theorem]{Corollary}
\newtheorem{conjecture}[theorem]{Conjecture}
\newtheorem{remark}[theorem]{Remark}
\newcommand{\Z}{\mathbb Z}
\newcommand{\C}{\mathbb C}
\newcommand{\be}{\begin{equation}}
\newcommand{\ee}{\end{equation}}
\newcommand{\und}{\;\mbox{ and }\;}
\newcommand{\nn}{\nonumber}
\newcommand{\ber}{\begin{eqnarray}}
\newcommand{\eer}{\end{eqnarray}}
\newcommand{\Summ}[1]{\underset{#1}{\sum}}
\newcommand\floor[1]{{\lfloor{#1}\rfloor}}
\newcommand{\mb}{\mathbb}
\author{Pablo Candela}
\address{Universidad Aut\'onoma de Madrid, and ICMAT\\ Madrid 28049\\ Spain}
\email{pablo.candela@uam.es}
\author{Diego Gonz\'alez-S\'anchez}
\address{Universidad Aut\'onoma de Madrid, and ICMAT\\ Madrid 28049\\ Spain}
\email{diego.gonzalezs@predoc.uam.es}
\author{David J. Grynkiewicz}
\address{University of Memphis\\
Department of Mathematical Sciences\\
Memphis, TN 38152\\
USA}
\email{diambri@hotmail.com}
\title{On sets with small sumset and $m$-sum-free sets in $\Z/p\Z$}
\begin{document}

\begin{abstract}
The $3k-4$ conjecture in groups $\Z/p\Z$ for $p$ prime states that if $A$ is a nonempty subset of $\Z/p\Z$ satisfying $2A\neq \Z/p\Z$ and $|2A|=2|A|+r \leq \min\{3|A|-4,\;p-r-4\}$, then $A$ is covered by an arithmetic progression of size at most $|A|+r+1$. A theorem of Serra and Z\'emor proves the conjecture provided $r\leq 0.0001|A|$, without any additional constraint on $|A|$. Subject to the mild additional constraint $|2A|\leq 3p/4$ (which is optimal in a sense explained in the paper), our first main result improves the bound on $r$, allowing $r\leq 0.1368|A|$. We also prove a variant which further improves this bound on $r$ provided $A$ is sufficiently dense. We then give several applications. First we apply the above variant to give a new upper bound for the maximal density of $m$-sum-free sets in $\Z/p\Z$, i.e., sets $A$ having no solution $(x,y,z)\in A^3$ to the equation $x+y=mz$, where $m\geq 3$ is a fixed integer. The previous best upper bound for this maximal density was $1/3.0001$ (using the Serra-Z\'emor Theorem). We improve this to $1/3.1955$. We also present a construction following an idea of Schoen, which yields a lower bound for this maximal density of the form $1/8+o(1)_{p\to\infty}$. Another  application of our main results concerns sets of the form $\frac{A+A}{A}$ in $\mb{F}_p$, and we also improve the structural description of large sum-free sets in $\mb{Z}/p\Z$.
\end{abstract}

\subjclass[2010]{11P70, 11B13, 05B10}


\thanks{This work has benefited from support from the Spanish Ministerio de Ciencia e Innovaci\'on project MTM2017-83496-P, and from the La Caixa Foundation (ID 100010434) under agreement LCF/BQ/SO16/52270027}
\maketitle

\section{Introduction}
Given a subset $A$ of an abelian group $G$, we often denote the sumset $A+A=\{x+y:x,y\in A\}$ by $2A$, and we denote the complement $G\setminus A$ by $\overline{A}$.

One of the central topics in additive number theory is the study of the structure of a finite subset $A$ of an abelian group under the assumption that the sumset $2A$ is small. In this paper, we focus on groups $\Z/p\Z$ of integers modulo a prime $p$, and on the regime in which the \emph{doubling constant} $|2A|/|A|$ is within a small additive constant of the minimum possible value.

To put this in context, let us recall the basic fact that a finite set $A$ of integers always satisfies $|2A|\geq 2|A|-1$ and that this minimum is attained only if $A$ is an arithmetic progression (see \cite[Theorem 3.1]{Gr}). This description of extremal sets is extended by a result of Freiman, known as the $3k-4$ Theorem, which tells us that $A$ is still efficiently covered by an arithmetic progression even when $|2A|$ is as large as $3|A|-4$.

\begin{theorem}[Freiman's $3k-4$ Theorem]\label{thm-3k4Z-sym}
Let $A\subseteq \Z$ be a finite set satisfying $|2A|\leq 3|A|-4$. Then there is an arithmetic progression $P\subseteq \Z$ such that $A\subseteq P$ and $|P|\leq |2A|-|A|+1$.
\end{theorem}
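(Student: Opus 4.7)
The plan is to reduce Theorem~\ref{thm-3k4Z-sym} to a sharp additive inequality by normalization, and then to prove that inequality by induction on $|A|$.

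After applying an affine transformation $x\mapsto \alpha x+\beta$ (which leaves $|A|$, $|2A|$, and the length of the smallest arithmetic progression covering $A$ invariant), I may assume $A=\{0=a_1<a_2<\cdots<a_k=n\}$ with $k=|A|$ and $\gcd(a_2,\ldots,a_k)=1$. Under these normalizations the shortest AP containing $A$ is $\{0,1,\ldots,n\}$, so the theorem reduces to the inequality
\[
n \;\le\; |2A|-|A|
\]
whenever $|2A|\le 3|A|-4$. Since $0,n\in A$, both $A$ and $A+n$ are subsets of $2A$ meeting only at $\{n\}$, giving the baseline $|2A|\ge 2k-1$; what remains is to produce a further $n-k+1$ elements of $2A$ inside the gap between $A$ and $A+n$.

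I then induct on $k$, with $k\le 3$ immediate from the baseline bound. For the inductive step, set $A':=A\setminus\{n\}$, of size $k-1$ and diameter $a_{k-1}$. Writing $2A=2A'\cup(A+n)$ and observing that both $2n$ and $a_{k-1}+n$ exceed $2a_{k-1}$ (so lie outside $2A'$), one obtains $|2A'|\le |2A|-2\le 3(k-1)-3$. If in fact $|2A'|\le 3(k-1)-4$, the inductive hypothesis gives an AP $P'$ of length at most $|2A'|-(k-1)+1$ covering $A'$; the gcd-one normalization forces $P'$ to have common difference $1$, and a count of the sums $a_i+n$ that land strictly above $2a_{k-1}$ (at least $n-a_{k-1}+1$ of them) then permits extending $P'$ to an AP of the required length that also contains $n$.

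The main obstacle is the borderline case $|2A'|=3(k-1)-3$, where the inductive hypothesis is unavailable. In this situation I would work directly with the explicit chain of $2k-1$ sums $2a_1<a_1+a_2<2a_2<\cdots<2a_{k-1}<a_{k-1}+a_k<2a_k$ in $2A$, use the tight upper bound $|2A|\le 3k-4$ to force most consecutive gaps $a_{i+1}-a_i$ to equal $1$, and then count the cross sums $a_i+a_j$ with $1<i\le j<k$ that fall in $(a_{k-1},n)\cup(n, a_{k-1}+n)$. Either this count yields the missing extra elements of $2A$, or else the gap analysis forces $A$ to be a very short AP for which the conclusion is trivial. A symmetric argument removing $a_1$ instead of $a_k$ handles the mirror configuration and closes the induction.
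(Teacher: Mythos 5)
The paper does not actually prove Theorem~\ref{thm-3k4Z-sym}: it is quoted as Freiman's classical result (see \cite{Gr}), so your proposal must stand on its own, and as written it has genuine gaps. Your normalization and the reduction to showing $n\le |2A|-|A|$ for $A=\{0=a_1<\cdots<a_k=n\}$ with $\gcds(A)=1$ is the standard correct start, but the induction that is supposed to deliver this inequality does not close. First, deleting the maximal element can destroy the normalization: $\gcds(A\setminus\{n\})$ may exceed $1$ even though $\gcds(A)=1$, so the assertion that ``the gcd-one normalization forces $P'$ to have common difference $1$'' is wrongly justified; this case can in fact be excluded (if $d=\gcds(A')\ge 2$ then $2A'$ and $A'+n$ lie in different residue classes mod $d$, forcing $|2A|\ge 3k-3$), but you give no such argument. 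Second, and more seriously, the quantitative claim driving your inductive step --- that at least $n-a_{k-1}+1$ of the sums $a_i+n$ exceed $2a_{k-1}$ --- is false under your hypotheses. Take $A=[0,10]\cup\{12,15\}$: then $k=13$, $|2A|=28\le 3k-4=35$, and $A'=A\setminus\{15\}$ has $|2A'|=24\le 3(k-1)-4$, so you are squarely in your ``good'' subcase; yet $n-a_{k-1}+1=4$ while only three sums $a_i+15$ (namely $25,27,30$) lie above $2a_{k-1}=24$. The element of $2A\setminus 2A'$ that saves the count, $23=8+15$, lies \emph{below} $2a_{k-1}$, and your argument has no mechanism for detecting such elements. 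In effect, the inequality you need, $|A\cap(2a_{k-1}-n,\,n]|\ge n-a_{k-1}+1$, is a half-density statement about the top of $A$ that is essentially as strong as the theorem you are trying to prove.

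Third, the borderline case $|2A'|=3(k-1)-3$ --- precisely where the real difficulty of the $3k-4$ theorem is concentrated (sets resembling an interval together with one distant point) --- is only gestured at: the chain $2a_1<a_1+a_2<2a_2<\cdots<2a_k$ gives the trivial bound $|2A|\ge 2k-1$, but the hypothesis $|2A|\le 3k-4$ does not by itself ``force most consecutive gaps to equal $1$,'' and no argument is supplied for the asserted dichotomy or for the mirrored case. The established proofs avoid this top-element induction exactly because of these failure modes: Freiman's original argument (and its modern treatments, e.g.\ in \cite{Gr}, or the Lev--Smeliansky approach) reduces $A$ modulo $n=\max A$ and applies a Cauchy--Davenport/Kneser-type inequality to the projected set, obtaining $|2A|\ge\min\{|A|+n,\;3|A|-3\}$ in one stroke, from which your normalized inequality follows at once. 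So while your initial reduction is sound, the core of the proposal is not a proof.
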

For sets $A$ in $\Z/p\Z$ with $2A\neq \Z/p\Z$, the Cauchy-Davenport Theorem \cite[Theorem 6.2]{Gr} gives the lower bound analogous to the one for $\Z$ mentioned above, namely $|2A|\geq 2|A|-1$, and the description of extremal sets as arithmetic progressions (when $|2A|< p-1$) is given by Vosper's Theorem  \cite[Theorem 8.1]{Gr}.

It is widely believed that an analogue of Freiman's $3k-4$ Theorem holds for subsets of $\Z/p\Z$ under some mild additional upper bound on $|2A|$ (or on  $|A|$). More precisely, the following conjecture is believed to be true (see \cite[Conjecture 19.2]{Gr}), describing efficiently not just $A$, but also $2A$, in terms of progressions.

\begin{conjecture}\label{conj-3k-4}
Let $p$ be a prime and let $A\subset \Z/p\Z$ be a nonempty subset satisfying $2A\neq \Z/p\Z$ and $|2A|=2|A|+r \leq \min\{3|A|-4,\;p-r-4\}$. Then there exist arithmetic progressions $P_A,P_{2A}\subseteq \Z/p\Z$ with the same difference such that $A\subseteq P_A$, $|P_A|\leq |A|+r+1$, $P_{2A}\subseteq 2A$, and $|P_{2A}|\geq 2|A|-1$.
\end{conjecture}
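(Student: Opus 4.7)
The plan is to attempt the conjecture through a rectification strategy, reducing the problem in $\Z/p\Z$ to Freiman's $3k-4$ Theorem (Theorem \ref{thm-3k4Z-sym}) for integers. First I would apply Pl\"unnecke--Ruzsa-type bounds to control iterated sumsets: the hypothesis $|2A|\leq 3|A|-4$ gives doubling constant less than $3$, so for each fixed $s$ one obtains $|sA-sA|\leq C_s|A|$ with $C_s$ polynomial in $s$. Provided $|sA-sA|<p$ for a sufficiently large $s$, a Freiman $s$-isomorphism from $A$ to a subset $A'\subseteq\Z$ should exist by Green--Ruzsa rectification. Applying Theorem \ref{thm-3k4Z-sym} to $A'$ then yields an arithmetic progression $P\subseteq\Z$ with $A'\subseteq P$ and $|P|\leq |A|+r+1$; transporting $P$ back through the isomorphism produces the desired $P_A\subseteq \Z/p\Z$.

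To produce the long progression $P_{2A}\subseteq 2A$, I would exploit the fact that the integer version of the $3k-4$ theorem actually supplies more than just a cover of $A'$: when $A'$ is densely contained in an arithmetic progression $P$ of length $|A|+r+1$, the sumset $2A'$ contains an arithmetic progression of length at least $2|A|-1$ with the same common difference as $P$. Pulling this progression back through the Freiman isomorphism (which preserves sums of pairs), one obtains $P_{2A}\subseteq 2A$ of the required length, automatically with the same common difference as $P_A$. The hypothesis $|2A|\leq p-r-4$ ensures the images of $A$ and of $2A$ both fit strictly inside $\Z/p\Z$, which is what justifies invoking an $s$-isomorphism with $s\geq 2$ on $A$ compatibly with a statement about $2A$.

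The main obstacle lies in the rectification step when $|A|/p$ is not small. The Pl\"unnecke--Ruzsa bound $|sA-sA|\leq C_s|A|$ must be compared with $p$, and for densities $|A|/p$ near $1/3$ even $|2A-2A|$ can exceed $p$, at which point no Freiman isomorphism to $\Z$ of the required order exists and the reduction collapses. This is precisely the regime where the Serra--Z\'emor theorem and the new bound $r\leq 0.1368|A|$ advertised in the abstract are forced to impose either smallness of $r/|A|$ or the density constraint $|2A|\leq 3p/4$. A complete proof would therefore need to split into cases according to $|A|/p$, with the dense regime treated by arguments intrinsic to $\Z/p\Z$—for instance Hamidoune-type isoperimetric methods, Fourier-analytic estimates on $\widehat{1_A}$, or the iterative compression techniques developed by Serra--Z\'emor and refined here.

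I would expect the sparse-to-medium density range (say $|A|\leq cp$ for an explicit $c$) to succumb to the rectification approach above, while the real work, and likely the substance of this paper, lies in the medium-density range where $|A|$ is a constant fraction of $p$. In that range I anticipate that the argument proceeds by assuming for contradiction that $A$ is not covered by a short progression, deriving a lower bound on $|2A|$ that violates the hypothesis $|2A|\leq 2|A|+r$, and that the threshold $3p/4$ on $|2A|$ enters when quantifying how much structure $2A$ must carry before an arithmetic progression can be extracted.
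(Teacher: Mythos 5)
The statement you are trying to prove is stated in the paper as a \emph{conjecture} (it is Conjecture 19.2 in Grynkiewicz's book), and the paper does not prove it: its actual results (Theorems \ref{thm:main1} and \ref{thm:main2}, both deduced from Theorem \ref{fouier-nightmare-calc}) establish the conclusion only under the much stronger hypotheses $|2A|\leq (2+\alpha)|A|-3$ with $\alpha\approx 0.1368$ together with $|2A|\leq \frac34 p$ (or the density-dependent variant). So any complete argument you gave would need to settle an open problem, and yours does not.

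The concrete gap is exactly where you flag it, but flagging it does not close it. Your rectification step (Pl\"unnecke--Ruzsa control of $|sA-sA|$ plus Green--Ruzsa rectification) only applies when $|A|/p$ is very small --- in Green--Ruzsa's work the doubling-3 regime requires $|A|<p/10^{215}$ --- whereas the conjecture's hypotheses $|2A|=2|A|+r\leq\min\{3|A|-4,\,p-r-4\}$ permit $|A|$ as large as roughly $p/2$ when $r$ is small (take $r=-1$), and even in the extremal case $r=|A|-4$ they permit $|A|$ up to about $p/4$, with $|2A|$ up to $\frac34p$. In that range $|2A-2A|$ (indeed already $|A-A|$ or $|2A|$ relative to $p$) can be far too large for any Freiman $s$-isomorphism onto $\Z$ to exist, so the reduction to Theorem \ref{thm-3k4Z-sym} collapses, as you note. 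For this dense regime you offer only a list of candidate techniques (isoperimetric, Fourier, compression) with no argument; this is precisely the part that is open and that the paper attacks only partially, via a case split between a large rectifiable sub-sumset (handled by the integer $3k-4$ theorem plus a saturation/duality argument) and a Fourier-analytic bound on $|S_A(d)|$ \`a la Freiman--Lev, which is where the thresholds $\alpha\approx 0.1368$ and $\frac34p$ come from. In short: the sparse case you sketch is already known, the pull-back of the long progression in $2A'$ is fine, but the proposal proves nothing in the regime where the conjecture has content, so it is not a proof of the stated result.
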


Progress toward this conjecture was initiated by Freiman himself, who proved in \cite{Freiman61} that the conclusion concerning $P_A$ holds provided that  $|2A| \leq 2.4|A| - 3$ and $|A| < p/35$. Since then, there has been much work  improving Freiman's result in various ways. For instance, R{\o}dseth showed in \cite{Rodseth06} that the constraint $|A| < p/35$  can be weakened to $|A| < p/10.7$ while maintaining the doubling constant $2.4$. In \cite{GR06}, Green and Ruzsa pushed the doubling constant up to 3, at the cost of a stronger constraint $|A| < p/10^{215}$. In \cite{SZ09}, Serra and Z{\'e}mor obtained a result with no constraint on $|A|$ other than the bounds on $|2A|$ in the conjecture, with the same conclusion concerning $P_A$, but at the cost of reducing the doubling constant, namely, assuming that $|2A| \leq (2+\alpha)|A|$ with $\alpha<0.0001$. See also \cite{CSS,LS} for recent improvements on the doubling constant $2.4$ in Freiman's result. The book \cite{Gr} presents various other results towards Conjecture \ref{conj-3k-4}, in a treatment covering many of the methods from the works mentioned above.

In this paper, we establish the following new result regarding Conjecture \ref{conj-3k-4}, which noticeably improves the doubling constant obtained by Serra and Z\'emor in \cite{SZ09} at the cost of only adding the constraint $|2A|\leq \frac34 p$.

\begin{theorem}\label{thm:main1}
Let $p$ be prime, let $A\subseteq \Z/p\Z$ be a nonempty subset with $|2A|=2|A|+r$, and  let $\alpha\approx 0.136861$ be the unique real root of the cubic $4x^3+9x^2+6x-1$. Suppose \begin{align*}  &|2A|\leq (2+\alpha)|A|-3\quad\und\quad |2A|\leq\frac34 p.\end{align*}
Then there exist arithmetic progressions $P_A,P_{2A}\subseteq \Z/p\Z$ with the same difference such that $A\subseteq P_A$, $|P_A|\leq |A|+r+1$, $P_{2A}\subseteq 2A$, and $|P_{2A}|\geq 2|A|-1$.
\end{theorem}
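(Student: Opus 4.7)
My plan is a rectification strategy: first show that under the hypotheses $A$ is Freiman $2$-isomorphic to a set $A'\subseteq\Z$, then apply Freiman's integer $3k-4$ Theorem (Theorem \ref{thm-3k4Z-sym}) to $A'$. Since $\alpha<1$ we have $|2A'|=|2A|\leq (2+\alpha)|A|-3<3|A|-4$, so Theorem \ref{thm-3k4Z-sym} produces an AP $P_{A'}\subseteq\Z$ covering $A'$ with $|P_{A'}|\leq |2A|-|A|+1=|A|+r+1$. Pulling back through the isomorphism gives $P_A$ of the required length, with common difference equal to the dilation factor used to set up the isomorphism. The AP $P_{2A}\subseteq 2A$ of length $\geq 2|A|-1$ follows analogously, using that $2A'$ contains an AP of that length (a well-known strengthening of Theorem \ref{thm-3k4Z-sym}), transported back to $\Z/p\Z$.

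The bulk of the work is thus the rectification. After normalizing by a suitable dilation, one aims to show that $A$ sits inside a short interval of residues $\{0,1,\ldots,L\}\subset\Z/p\Z$ with $L<p/2$, so that its projection to $\Z$ is injective and respects pair sums. I would begin by locating an AP $Q$ in $\Z/p\Z$ that captures a large portion of $A$ (for example, via Kneser's theorem applied to an appropriate iterated sumset, or through a Vosper-type refinement of a near-extremal configuration for $|2A|$). The remaining ``outliers'' of $A$ outside $Q$ must then be shown to lie in the same short interval by exploiting the small doubling; this is the point where the hypothesis $|2A|\leq\tfrac34p$ enters critically, since in the high-density regime where $|A|$ is comparable to $p$ it guarantees that the complement $\overline{2A}$ has density at least $1/4$, preventing the wrap-around collisions in the sumset that would otherwise obstruct rectification.

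The main obstacle is to carry out this rectification all the way to the sharp constant $\alpha$. The cubic $4\alpha^3+9\alpha^2+6\alpha-1$ is the natural signature of a three-parameter optimization problem balancing: (i) the length $L$ of the short interval into which $A$ is forced; (ii) the number of ``outliers'' that must be absorbed; and (iii) the extra contribution these outliers make to $|2A|$. Extracting the optimum will likely require an iterative or inductive scheme, bootstrapping an initial weak bound on the doubling constant into the sharp one via successive dilations, refinements of $Q$, and Pl\"unnecke/Kneser-type bounds on higher sumsets $kA$ (whose size is well-controlled precisely because $|2A|$ is small relative to $p$). This delicate balance, which goes well beyond the isoperimetric method of Serra--Z\'emor, is where I expect the technical novelty of the proof to lie.
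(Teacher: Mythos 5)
Your high-level plan (rectify $A$, then invoke the integer $3k-4$ theorem and pull the progressions back) matches the general philosophy of the paper, but the entire mathematical content of the theorem lies in the rectification step, and your proposal leaves it as a sketch whose ingredients do not actually work as described. The paper does not rectify $A$ by locating a progression $Q$ capturing most of $A$ and then absorbing outliers via Kneser/Vosper/Pl\"unnecke bounds; indeed it never rectifies the full sumset directly. Instead it runs a dichotomy. In the first case one assumes there exist $A',B'\subseteq A$ with $A'+B'$ rectifiable and $|A'|+2|B'|-4\geq |2A|$, chooses such a pair with $|A'|+|B'|$ maximal, applies the integer $3k-4$ theorem to $A'+B'$, and then — this is the key trick you are missing — applies the $3k-4$ theorem a \emph{second} time to the sumset $-A'+\overline{A'+A}$, using the saturation identity $-A'+\overline{A'+A}\subseteq\overline{A}$ (Lemma \ref{lemma-dual}) to conclude that $A$ itself lies in a short progression; this either finishes the proof or contradicts maximality, with the hypothesis $|2A|\leq\frac34 p$ entering through explicit inequalities (Steps A and B and the final estimate $|2A|\geq\frac34 p+2$), not through a heuristic about wrap-around collisions.

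In the second case, where no such pair $A',B'$ exists, every ``half-arc'' slice of $A$ is small, and Lev's theorem on the distribution of points on arcs gives the pointwise bound $|S_A(d)|\leq\frac13|A|+\frac23 r+2$ for all $d\neq 0$; a standard Fourier computation with Cauchy--Schwarz then forces $|2A|$ to exceed $\varepsilon p=\frac34 p$ unless $4\beta^3+(12-4\varepsilon)\beta^2+(9-4\varepsilon)\beta+8\varepsilon-7>0$, and this is exactly where the cubic $4x^3+9x^2+6x-1$ comes from (at $\varepsilon=\frac34$). So your attribution of the cubic to a three-parameter optimization over interval length and outlier counts is incorrect, and your proposal contains no plan for the non-rectifiable case at all, which is precisely the case your ``main obstacle'' paragraph would have to resolve. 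Without the maximality-plus-saturation argument and the Fourier/Lev dichotomy (or some genuine substitute), the proof does not go through.
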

Unlike in \cite{SZ09}, here we do have a constraint on $|A|$ in the form of the upper bound $|2A|\leq \frac34 p$. However, this upper bound is still optimal in the following weak sense. The conjectured upper bound on $|2A|$ (given by Conjecture \ref{conj-3k-4}) is $p-r-4$. However, in the extremal case where $r=|A|-4$ (the largest value of $r$ allowed in Conjecture \ref{conj-3k-4}), the conjectured bound implies $3|A|-4=|2A|\leq p-|A|$, whence $|A|\leq \frac{p+4}{4}$ and $|2A|=3|A|-4\leq \frac{3p}{4}-1$.
Thus, the  bound $p-r-4$ becomes as small  as $\frac{3p}{4}-1$ as we \emph{range} over all allowed values for $\alpha$ and $|A|$, making $\frac34p$ the optimal bound independent of $\alpha$ and $r$.

Let us emphasize that our improvement upon the Serra--Z\'emor result (i.e.\ our weakening of the constraint on $\alpha$) is valid for $|A| \leq \frac{0.75 p +3}{2+\alpha}$, whereas the natural upper bound on $|A|$ given by Conjecture \ref{conj-3k-4} is larger, namely $|A|\leq \frac{p+2}{2+2\alpha}$. Therefore, in the regime $\frac{0.75 p +3}{2+\alpha}<|A|\leq \frac{p+2}{2+2\alpha}$, our result does not improve on that of Serra and Z\'emor.

We also prove the following variant of Theorem \ref{thm:main1}, which is optimized for sets $A$ whose  density is large but at most 1/3. This optimization is designed for an application concerning $m$-sum-free sets, which we discuss below.

\begin{theorem}\label{thm:main2}
Let $p$ be prime, let $\eta\in (0,1)$, let $A\subseteq \Z/p\Z$ be a set with $|A|\geq \eta\, p>0$ and $|2A|=2|A|+r< p$, and let $\alpha= -\frac{5}{4} + \frac{1}{4}\sqrt{9+8\,\eta\, p\sin(\pi/p)/\sin(\pi\eta/3)}$. Suppose
\begin{align*}
&|2A|\leq (2+\alpha)|A|-3\quad\und\quad |A|\leq\frac{p-r}{3}.
\end{align*}
Then there exist arithmetic progressions $P_A,P_{2A}\subseteq \Z/p\Z$ with the same difference  such that $A\subseteq P_A$, $|P_A|\leq |A|+r+1$, $P_{2A}\subseteq 2A$, and  $|P_{2A}|\geq 2|A|-1$.
\end{theorem}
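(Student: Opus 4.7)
The plan is to follow the overall scheme of the proof of Theorem~\ref{thm:main1}, modifying the step in which the bound $|2A|\leq\tfrac{3}{4}p$ is used; in its place we will exploit the density lower bound $|A|\geq \eta p$ together with the constraint $|A|\leq (p-r)/3$. The latter is equivalent to $|A|+|2A|\leq p$, which forbids $2A$ from wrapping around in $\Z/p\Z$ and, together with the density hypothesis, places us in a regime where sharp Fourier-analytic / rectification estimates are available.

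First, I would invoke a Freiman-type rectification lemma on $\Z/p\Z$: the density and the no-wrap-around condition together produce a non-zero $\lambda\in\Z/p\Z$ such that $\lambda\cdot A$ lies inside a short interval of $\Z/p\Z$, with length controlled by a Dirichlet-kernel-type bound involving the factor $\sin(\pi/p)/\sin(\pi\eta/3)$. Note that the formula defining $\alpha$ in the statement is equivalent to the identity
\[(\alpha+2)(2\alpha+1)=\eta p\,\sin(\pi/p)/\sin(\pi\eta/3),\]
which is exactly the quadratic balance condition arising from matching the doubling hypothesis $|2A|\leq (2+\alpha)|A|-3$ against the rectification estimate (the factor $\alpha+2$ corresponding to the doubling, and $2\alpha+1$ to the slack left by the rectification).

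Second, once $\lambda\cdot A$ has been placed inside an interval of $\Z$, the doubling inequality $|2A|\leq 3|A|-4$ (valid because the quadratic defining $\alpha$ forces $\alpha\leq 1$) allows a direct application of Freiman's $3k-4$ Theorem in $\Z$ (Theorem~\ref{thm-3k4Z-sym}) to $\lambda\cdot A$. This yields an arithmetic progression $Q_A\subseteq\Z$ covering $\lambda\cdot A$ with $|Q_A|\leq |2A|-|A|+1=|A|+r+1$, and the standard refinement of that theorem produces an arithmetic progression inside $2(\lambda\cdot A)$ of size at least $2|A|-1$ sharing the same common difference as $Q_A$. Pulling both progressions back through the dilation $\lambda^{-1}$ yields the desired $P_A,P_{2A}\subseteq \Z/p\Z$.

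The main obstacle lies in the first step: establishing a rectification bound sharp enough to reproduce the exact formula for $\alpha$ requires using both hypotheses ($|A|\geq\eta p$ and $|A|\leq(p-r)/3$) simultaneously and matching them against the doubling constant to yield the quadratic identity above. Once the correct rectification is in place with the precise constant encoded in $\sin(\pi/p)/\sin(\pi\eta/3)$, the subsequent reduction to Freiman's $3k-4$ Theorem in $\Z$ and the pull-back of the resulting progressions are routine.
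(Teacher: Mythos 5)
Your proposal has a genuine gap at its central step: the ``Freiman-type rectification lemma'' that is supposed to produce a nonzero $\lambda$ with $\lambda\cdot A$ inside a short interval of $\Z/p\Z$. No such standalone lemma exists with the sharpness you need, and it is precisely the content of most of the paper's proof. You have correctly decoded the algebraic meaning of the constant $\alpha$ (the factorization $(\alpha+2)(2\alpha+1)=\eta p\sin(\pi/p)/\sin(\pi\eta/3)$ is exactly the identity that makes the Fourier computation close), but identifying that target does not by itself give a mechanism to reach it. Density $|A|\geq\eta p$ and the no-wrap-around condition $|A|+|2A|\leq p$ do not, even in combination with small doubling, directly yield rectification by a dilation; one must work for it.

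What the paper actually does is a dichotomy on the largest rectifiable sub-sumset. In Case~1 one assumes there exist $A',B'\subseteq A$, with $|B'|\leq|A'|$, such that $A'+B'$ is rectifiable and satisfies $|A'|+2|B'|-4\geq|2A|$; one picks such a pair with $|A'|+|B'|$ maximal, applies the $3k-4$ Theorem to $A'+B'$, proves two auxiliary bounds (Steps~A and~B) for $|-A'+\overline{A'+A}|$, applies the $3k-4$ Theorem a \emph{second} time to the dual sumset $-A'+\overline{A'+A}$ to show $A'+A$ is itself rectifiable, and then uses maximality to force $A'=B'=A$, at which point a final application of the $3k-4$ Theorem gives $P_A$ and $P_{2A}$. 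In Case~2 (no such large rectifiable sub-sumset), the assumption bounds the number of points of $A$ in any half-arc, and Lev's distribution-of-points-on-arcs bound [Theorem~2 of \cite{Lev}] gives $|S_A(d)|\leq\gamma(\tfrac13|A|+\tfrac23 r+2)$ with $\gamma=\sin(\pi\eta/3)/(\tfrac{\eta}{3}p\sin(\pi/p))$; plugged into the standard energy identity and combined with $|2A|\leq\frac{2p+r}{3}$ (which is where $|A|\leq(p-r)/3$ replaces the old $\frac34 p$ bound), this yields $(2\beta+1)(\beta+2)>3\gamma^{-1}$, i.e.\ $\beta>\alpha$, a contradiction. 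So Case~2 never produces a rectification — it is a contradiction that eliminates the case, and the actual rectification of $A$ happens only at the end of Case~1.

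In short: your plan skips the entire Case~1 iteration (maximal rectifiable sub-sumset, Steps~A and~B, the dual application of the $3k-4$ Theorem) and misreads the role of the Fourier/Dirichlet-kernel bound, which in the paper rules out the complementary Case~2 rather than supplying a dilation. As written, the first step of your proposal is unsupported and the proof does not go through.
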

We apply this result to obtain new upper bounds for the size of $m$-sum-free sets in $\Z/p\Z$. For a positive integer $m$, a subset $A$ of an abelian group is said to be \emph{$m$-sum-free} if there is no triple $(x,y,z)\in A^3$ satisfying $x+y=mz$. These sets have been studied in  numerous works in arithmetic combinatorics, including various types of abelian group settings \cite{B&al,C&G,C&G-cts,D&P,M&R} (see also \cite[Section 3]{C&R} for an overview of this topic). In $\Z/p\Z$, a central goal concerning these sets is to estimate the quantity
\be\label{eq:def-d_m}
d_m(\Z/p\Z)=\max\big\{\tfrac{|A|}{p}:A\subseteq \Z/p\Z\textrm{ $m$-sum-free}\big\}.
\ee
This goal splits naturally into two problems of different nature. On one hand, we have the case $m=2$, which is the only one in which the solutions of the linear equation in question (i.e., 3-term arithmetic progressions) form a translation invariant set. Roth's Theorem \cite{Roth} tells us that $d_2(\Z/p\Z)\to 0$ as $p\to\infty$, and the problem in this case is then the well-known one of determining the optimal bounds for Roth's theorem, i.e., how fast $d_2(\Z/p\Z)$ vanishes as $p$ increases (recent developments in this direction include \cite{Bloom,Sanders}). On the other hand, we have the cases $m\geq 3$.  For each of these, the above-mentioned translation-invariance fails, and it is known that $d_m(\Z/p\Z)$ converges, as $p\to\infty$ through primes, to a positive constant $d_m$ which can be modeled on the circle group (see \cite{C&S}), the problem then being to determine this constant. Our application of Theorem \ref{thm:main2} makes progress on the latter problem.

Note that, if $A$ is $m$-sum-free, then the \emph{dilate}
$m\cdot A=\{mx:\;x\in A\}\subseteq \Z/p\Z$ satisfies $2A\cap m\cdot A =\emptyset$, whence, if $m$ and $p$ are coprime, we have $|2A|+|m\cdot A|=|2A|+|A|\leq p$. Combining this with the bound $|2A|\geq 2|A|-1$ given by the Cauchy-Davenport Theorem, we deduce the simple bound $|A|\leq \frac{p+1}{3}$, which implies in particular that $d_m\leq 1/3$. It was noted in \cite{C&R} that partial versions of Conjecture \ref{conj-3k-4} can be used to improve on this bound, provided these versions are applicable to sets of density up to $1/3$. The best version available for that purpose in \cite{C&R} was given by the theorem of Serra and Z\'emor mentioned above, and this resulted in the first upper bound for $d_m$ below $1/3$, namely $1/3.0001$ (see \cite[Theorem 3.1]{C&R}). In this paper, using Theorem \ref{thm:main2} we obtain the following improvement.

\begin{theorem}\label{thm:dm-ub1}
Let $p\geq 80$ be a prime, let $m$ be an integer in $[2,p-2]$, and let $c=c(p)$ be the solution to the equation $\Big(7+\sqrt{8\,c\, p\sin(\pi/p)/\sin(\pi c/3)+9}\Big)\, c= 4+\frac{12}{p}$. Then $d_m(\Z/p\Z)<c$. In particular, $d_m\leq \frac{1}{3.1955}$.
\end{theorem}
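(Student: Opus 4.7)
The plan is to argue by contradiction: assume $A\subseteq\Z/p\Z$ is an $m$-sum-free set with $|A|=n\geq c\,p$, and deduce a contradiction by combining Theorem~\ref{thm:main2} with the $m$-sum-free structure. Since $p$ is prime and $m\in[2,p-2]$, we have $\gcd(m,p)=1$, so $|m\cdot A|=n$. The $m$-sum-free condition $2A\cap(m\cdot A)=\emptyset$ then yields the basic inequality $n+|2A|\leq p$. Writing $|2A|=2n+r$, this gives $r\leq p-3n$, i.e.\ $n\leq(p-r)/3$, which verifies the second hypothesis of Theorem~\ref{thm:main2}.

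The next step is to verify the first hypothesis $|2A|\leq(2+\alpha)n-3$ with the choice $\eta:=c$. The defining equation of $c$ rearranges algebraically to $(3+\alpha)c=1+3/p$, equivalently $(3+\alpha)cp=p+3$, where $\alpha$ is the threshold of Theorem~\ref{thm:main2} at $\eta=c$. Since $n\geq cp$ this yields $(3+\alpha)n\geq p+3$, or $(2+\alpha)n-3\geq p-n\geq|2A|$, as required. Thus Theorem~\ref{thm:main2} applies, producing arithmetic progressions $P_A,P_{2A}\subseteq\Z/p\Z$ with a common difference $d$ (necessarily coprime to $p$) satisfying $A\subseteq P_A$, $|P_A|\leq n+r+1\leq p-2n+1$, $P_{2A}\subseteq 2A$ and $|P_{2A}|\geq 2n-1$.

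I would then normalise by the multiplicative dilation $x\mapsto d^{-1}x$. This preserves the equation $x+y=mz$ and hence $m$-sum-freeness, so I may assume $d=1$; thus $I:=P_A$ and $J:=P_{2A}$ are intervals in $\Z/p\Z$ with $A\subseteq I$, $|I|=L\leq p-2n+1$, $J\subseteq 2A$ and $|J|\geq 2n-1$. Since $m\cdot A$ is disjoint from $2A$ and a fortiori from $J$, we get $m\cdot A\subseteq(m\cdot I)\setminus J$, hence $n=|m\cdot A|\leq L-|m\cdot I\cap J|$. Combined with $L\leq n+r+1$, this forces the upper bound $|m\cdot I\cap J|\leq r+1$.

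The main obstacle is now to produce a matching \emph{lower} bound on $|m\cdot I\cap J|$ that strictly exceeds $r+1$ whenever $n\geq cp$, thereby giving the contradiction. Here $m\cdot I$ is an arithmetic progression of length $L$ with step $m$, and $J$ is an interval of length at least $2n-1$, both in $\Z/p\Z$. Heuristically the intersection has size about $L|J|/p\approx 2c(1-2c)p$, which for $c\approx 1/3.1955$ is substantially larger than $r+1\lesssim(1-3c)p$; the challenge is to turn this heuristic into a rigorous bound. To do so I would apply a Fourier-type (Erd\H{o}s--Tur\'an style) estimate in which the error term is controlled by the Fourier coefficients of the indicator of $J$, namely $|\sin(\pi h|J|/p)/\sin(\pi h/p)|$. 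It is precisely the factor $\sin(\pi/p)/\sin(\pi c/3)$ in the defining equation of $c$ that encodes the dominant part of this error, and the equation is calibrated so that the lower bound on $|m\cdot I\cap J|$ crosses $r+1$ exactly at $n=cp$. Finally, the asymptotic bound $d_m\leq 1/3.1955$ follows upon letting $p\to\infty$ in the equation defining $c(p)$, together with the known convergence $d_m(\Z/p\Z)\to d_m$ noted in the introduction.
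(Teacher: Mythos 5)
Your reduction is the same as the paper's: assume $|A|=n\geq cp$, use $2A\cap(m\cdot A)=\emptyset$ to get $n\leq(p-r)/3$, note that the defining equation of $c$ is exactly $(3+\alpha(c,p))c=1+3/p$, i.e.\ $cp=\frac{p+3}{3+\alpha}$, so the doubling hypothesis $|2A|\leq(2+\alpha)n-3$ holds and Theorem \ref{thm:main2} applies; then, after dilating so the common difference is $1$, the disjointness of $m\cdot A$ from $2A\supseteq P_{2A}$ together with $|P_A\setminus A|\leq r+1$ forces $|(m\cdot P_A)\cap P_{2A}|\leq r+1\leq \alpha n-2$. All of this is correct and matches the paper. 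The gap is the final step: you need a rigorous \emph{lower} bound showing that two progressions with common difference, of lengths about $(1+\alpha)n$ and $2n-1$, one of them dilated by $m\in[2,p-2]$, must intersect in more than $\alpha n-2$ elements once $n\geq\frac{p+3}{3+\alpha}$. This is precisely the content of the paper's Lemma \ref{AP-inter}, which is the main new technical ingredient of this proof, and you only offer the heuristic main term $L|J|/p$ plus an unspecified Erd\H{o}s--Tur\'an/Fourier estimate. That step is genuinely delicate: the count $|(m\cdot I)\cap J|$ is not uniformly close to $L|J|/p$ for all dilation factors, and the paper's proof has to rule out extreme values of the (normalized) dilation factor $d$ — it shows one may assume $d\leq\frac{p-1}{2}$, then $s\geq 1$ wraps, then $11\leq d<p/6$, using $\alpha\leq 1/5$, $p\geq 80$ and the specific sizes of $I$ and $J$ — before the floor-function count $s\lfloor|I|/d\rfloor$ beats $\alpha N-2$. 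Without carrying out such an analysis (or a genuinely worked-out discrepancy estimate with explicit error terms uniform in $m$ and in the relative position of the progressions), the contradiction is not established.

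A secondary but real misconception: you attribute the factor $\sin(\pi/p)/\sin(\pi c/3)$ in the equation for $c$ to the Fourier error in the intersection count. It does not come from there. That factor sits inside the definition of $\alpha(\eta,p)$ in Theorem \ref{thm:main2}, where it arises from Lev's exponential-sum estimate used in Case 2 of that theorem's proof; the equation for $c$ is calibrated only so that $cp=\frac{p+3}{3+\alpha(c,p)}$, i.e.\ so that the threshold of Lemma \ref{AP-inter} is met, and the progression-intersection lemma itself is proved by elementary counting with room to spare (its proof ends with a strict inequality such as $\frac1{3.2}<\frac1{12}+\frac1{11}+\frac1{10}+\frac3p$), not by a sharp Fourier calibration. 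The closing limit argument ($p\to\infty$ giving $d_m\leq 1/3.1955$) is fine, modulo the standard monotonicity check the paper performs.
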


The following observation, relating this theorem to the study of sum-products in the field $\mb{F}_p$, was made by the anonymous referee: if $(A+A)\cap m\cdot A$ contains a non-zero element and $0\notin A$, then $m$ is in the set $\frac{A+A}{A}:=\{ (a_1+a_2)a_3^{-1}:a_1, a_2, a_3\in A\}\subset \mb{F}_p$, and therefore Theorem \ref{thm:dm-ub1} has the following consequence.

\begin{corollary}
If $A\subset \mb{F}_p\setminus\{0\}$ satisfies $|A|\geq 0.313\,p$, then for $p$ sufficiently large we have $ \mb{F}_p\setminus\{-1,0,1\}\subseteq \frac{A+A}{A}$.
\end{corollary}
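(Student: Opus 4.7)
The plan is to deduce the corollary directly from Theorem \ref{thm:dm-ub1} via the referee's observation recalled just before it. The first step is to identify the set $\mb{F}_p \setminus \{-1,0,1\}$ with the set of residues modulo $p$ of integers $m \in [2, p-2]$, so that it suffices to show that for each such $m$, the residue class of $m$ lies in $\frac{A+A}{A}$.

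The second step is to show that, for $p$ sufficiently large, the hypothesis $|A| \geq 0.313\,p$ forces $A$ to fail to be $m$-sum-free for every $m \in [2, p-2]$. Let $c(p)$ denote the function defined in Theorem \ref{thm:dm-ub1}. Since $\lim_{p\to\infty} p\sin(\pi/p) = \pi$, taking $p \to \infty$ in the defining equation of $c(p)$ yields the limit $c_\infty := \lim_{p\to\infty} c(p) = 1/3.1955$, and a direct check shows $c_\infty = 0.312940\ldots < 0.313$. Consequently there exists $p_0 \geq 80$ such that $c(p) \leq 0.313$ for all $p \geq p_0$. For any prime $p \geq p_0$, the density hypothesis $|A|/p \geq 0.313 \geq c(p)$, combined with Theorem \ref{thm:dm-ub1} (which asserts $d_m(\Z/p\Z) < c(p)$), shows that $A$ cannot be $m$-sum-free for any $m \in [2,p-2]$.

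The third step is to convert this non-$m$-sum-freeness into membership in $\frac{A+A}{A}$. Since $A$ is not $m$-sum-free, there exist $a_1, a_2, a_3 \in A$ with $a_1+a_2 = m\, a_3$. Because $0 \notin A$ by hypothesis, the element $a_3$ is invertible in $\mb{F}_p$, and therefore $m = (a_1+a_2)\,a_3^{-1} \in \frac{A+A}{A}$. Running this argument over all $m \in [2, p-2]$ yields the desired inclusion.

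There is no real obstacle here: the only nontrivial ingredient is Theorem \ref{thm:dm-ub1}, and the arithmetic of verifying $0.313 > 1/3.1955$ is what forces the qualifier ``$p$ sufficiently large'' in the statement. The hypothesis $0 \notin A$ is used only in the last step, to ensure that the denominator $a_3$ can be inverted.
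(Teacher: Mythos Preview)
Your proof is correct and follows exactly the route the paper indicates: apply Theorem \ref{thm:dm-ub1} to see that a set of density at least $0.313$ cannot be $m$-sum-free for any $m\in[2,p-2]$, and then invert $a_3\neq 0$ to exhibit $m\in\frac{A+A}{A}$. One small imprecision: the paper shows $\lim_{p\to\infty} c(p)<1/3.1955$ rather than equality, but since you only need $c_\infty<0.313$ this does not affect the argument.
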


\noindent This result is an analogue, for sets $\frac{A+A}{A}$, of Theorem 1.1 in \cite{BHS}, which says that if $A\subset \mb{F}_p$ has $|A|\geq 0.3051 \,p$, then for $p$ sufficiently large we have $\mb{F}_p\setminus\{0\}\subseteq (A+A)A:=\{ (a_1+a_2)a_3:a_i\in A\}$.

Regarding lower bounds for $d_m(\Z/p\Z)$, note that, identifying $\Z/p\Z$ with the integers $[0,p-1]$, the interval $(\frac{2}{m^2-4} p,\frac{m}{m^2-4}p)$ is an $m$-sum-free set. This set has asymptotic  density $\frac{1}{m+2}$, and is still the greatest known example for $m\leq 6$. However, for larger values of $m$, a construction of Tomasz Schoen (personal communication), presented in this paper in Lemma \ref{Schoen} in an optimized form thanks to indications of the anonymous referee, yields the improved lower bound $d_m\geq \frac18$. The following theorem summarizes these results.
\begin{theorem}\label{thm:lbs}
For $m\leq 6$, we have $d_m\geq \frac{1}{m+2}$. For $m\geq 7$, we have $d_m\geq \frac{1}{8}$.
\end{theorem}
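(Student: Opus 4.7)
For the case $m\leq 6$, my plan is to verify directly the explicit construction mentioned in the paragraph preceding the theorem. Identifying $\Z/p\Z$ with the integers $\{0,1,\ldots,p-1\}$, set
\[
A \;=\; \left(\tfrac{2p}{m^{2}-4},\; \tfrac{mp}{m^{2}-4}\right)\cap\Z.
\]
Viewing $A+A$ and $m\cdot A$ first as subsets of $\Z$, one has $A+A\subseteq\bigl(\tfrac{4p}{m^{2}-4},\,\tfrac{2mp}{m^{2}-4}\bigr)$ and $m\cdot A\subseteq\bigl(\tfrac{2mp}{m^{2}-4},\,\tfrac{m^{2}p}{m^{2}-4}\bigr)$. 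Reducing the second interval modulo $p$ makes it wrap exactly once, becoming $\bigl(\tfrac{2mp}{m^{2}-4},\,p\bigr)\cup\bigl[0,\tfrac{4p}{m^{2}-4}\bigr)$, which is disjoint from $A+A$. Hence $A$ is $m$-sum-free, with $|A|=\frac{(m-2)p}{m^{2}-4}+O(1)=\frac{p}{m+2}+O(1)$, yielding $d_{m}\geq 1/(m+2)$.

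For $m\geq 7$, the single-interval construction gives only $1/(m+2)\leq 1/9$, so a different idea is needed. Following Schoen, I would take $A$ to be the intersection of two pieces whose densities multiply to $1/8$: a host interval $I\subseteq\Z/p\Z$ of density about $1/4$, and the preimage $f^{-1}(J):=\{x\in\Z/p\Z:mx\bmod p\in J\}$ of a second interval $J$ of density $1/2$, positioned so that $I+I$ and $J$ land in complementary halves of $\Z/p\Z$. A concrete choice is $I=(p/4,p/2)$ and $J=(0,p/2)$: then $A+A\subseteq I+I\subseteq(p/2,p)$ as a subset of $\Z$ with no wraparound, while $m\cdot A\subseteq J=(0,p/2)$ by construction, so $2A\cap m\cdot A=\emptyset$ in $\Z/p\Z$ and $A$ is $m$-sum-free. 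The density bound $\tfrac14\cdot\tfrac12=\tfrac18$ comes straight out of this product structure and is visibly independent of $m$.

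The main obstacle I anticipate is the lower bound $|A|\geq(1/8-o(1))p$ as $p\to\infty$. The set $f^{-1}(J)$ is an arithmetic progression in $\Z/p\Z$ of common difference $m^{-1}\bmod p$ and length $\lfloor p/2\rfloor$, so $|A|$ counts how many terms of this progression lie in the interval $I$. This is a standard equidistribution question, which I would resolve via the three-distance theorem or a Weyl exponential-sum bound; the resulting discrepancy is $o(p)$ uniformly in $m$, giving density at least $1/8-o(1)$. Combining with the case $m\leq 6$ completes the proof.
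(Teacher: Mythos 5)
Your construction for $m\geq 7$ has a genuine gap: the density of the set $A=\{x\in(p/4,p/2):\ mx\bmod p\in(0,p/2)\}$ is \emph{not} $\tfrac18-o(1)$ for every fixed $m$. The issue is not the discrepancy term but the main term. For fixed $m$, the set $\{x:\ mx\bmod p\in(0,p/2)\}$ is (up to $O(m)$ boundary effects) the union of the $m$ intervals $(\tfrac{jp}{m},\tfrac{jp}{m}+\tfrac{p}{2m})$, $0\leq j<m$, so $|A|=c(m)p+O(m)$ where $c(m)=\frac1m\,\lvert\{u\in(\tfrac m4,\tfrac m2):\{u\}<\tfrac12\}\rvert$. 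A short computation gives $c(m)=\tfrac18$ for $m\equiv0$, $c(m)=\tfrac18+\tfrac{1}{8m}$ for $m\equiv1,3$, but $c(m)=\tfrac18-\tfrac{1}{4m}$ for $m\equiv2\pmod 4$. Concretely, for $m=10$ only the windows $j=3,4$ land inside $(p/4,p/2)$, giving $|A|\approx p/10<p/8$; for $m=14$ one gets $\approx 3p/28$. So the proposal fails to prove $d_m\geq\tfrac18$ for all $m\equiv2\pmod4$, $m\geq10$. Relatedly, the appeal to equidistribution ``uniformly in $m$'' is not available and would not help: the relevant arithmetic progression of length $\lfloor p/2\rfloor$ with difference $m^{-1}$ wraps around only about $m/2$ times, so its intersection with a fixed interval can deviate from proportionality by $\Theta(p/m)$ — which for fixed $m$ is a positive fraction of $p$, exactly the shortfall computed above. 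Any fix must either move the windows $I,J$ (their positions are essentially forced once $|I|=p/4$, $|J|=p/2$ and $2I$ must avoid $J$, so one has to vary the translate of $J$ and redo the count per residue class of $m$) or change the construction. This is precisely the delicate point the paper's Lemma~\ref{Schoen} confronts head on: there $A$ is built inside an interval of density $\approx\tfrac12$ from $\approx m/4$ residue classes mod $m$, for primes $p=4m^2n+1$, and in the troublesome cases $m\equiv2,3\pmod4$ an extra block $B$ is adjoined specifically to push the density back up to $\tfrac18$; the case analysis mod $4$ cannot be waved away by a ``product structure'' heuristic.

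The $m\leq 6$ part uses the same interval as the paper and is essentially fine, but note two small points: the theorem is only meaningful for $3\leq m\leq 6$ (for $m=2$ Roth's theorem gives $d_2=0$, and your formula degenerates anyway), and for $m=3$ your description of the reduction is off, since $\tfrac{2mp}{m^2-4}=\tfrac{6p}{5}>p$, so $A+A$ itself wraps and $m\cdot A$ does not reduce to the stated union. The clean argument is that $(A+A)\cup(m\cdot A)$ sits inside the window $(\tfrac{4p}{m^2-4},\tfrac{m^2p}{m^2-4})$ of length exactly $p$, on which reduction mod $p$ is injective, so the disjointness in $\Z$ persists in $\Z/p\Z$.
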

Our final application concerns the study of large sum-free sets in $\Z/p\Z$ (i.e.\ the case $m=1$ of $m$-sum-free sets as defined above). It is well-known, by the argument using the Cauchy-Davenport Theorem mentioned above, that a sum-free set in $\Z/p\Z$ has size at most $\lfloor (p+1)/3 \rfloor$, and that this bound is attained by the interval $I=(p/3,2p/3)\subset \Z/p\Z$ and by any non-zero dilate of $I$. Several works have studied the question of the robustness of this structural description, namely, whether every sum-free set in $\Z/p\Z$ of density close to $1/3$ must resemble a dilate of $I$. In this direction the following theorem was proved by Deshouillers and Lev in  \cite{D&L}.
\begin{theorem}\label{thm:D&L}
Let $p$ be a sufficiently large prime, and suppose that $A\subset \Z/p\Z$ is sum-free. If $|A| > 0.318\, p$, then there exists $d\in\mb{Z}$ such that $A \subset d \cdot [\, |A| , p - |A|\,]$.
\end{theorem}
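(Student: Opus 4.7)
The plan is to derive Theorem~\ref{thm:D&L} from Theorem~\ref{thm:main2}, exploiting the sum-free condition both to verify the hypotheses of that theorem and to locate the arithmetic progression that it produces.

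For the hypotheses, observe that sum-freeness gives $|2A|\leq p-|A|<0.682\,p$; writing $r=|2A|-2|A|$, one has $r\leq p-3|A|$ and hence $|A|\leq(p-r)/3$. The other hypothesis $|2A|\leq(2+\alpha)|A|-3$ holds for $p$ large: taking $\eta=0.318$, the quantity $\alpha$ in Theorem~\ref{thm:main2} tends as $p\to\infty$ to a limit $\alpha_0>0.19$, so $(2+\alpha)|A|-3$ exceeds $0.698\,p-3$ asymptotically, which beats $|2A|<0.682\,p$. Thus Theorem~\ref{thm:main2} yields arithmetic progressions $P_A,P_{2A}$ with common difference $d$, satisfying $A\subseteq P_A$, $|P_A|\leq|A|+r+1$, $P_{2A}\subseteq 2A$, and $|P_{2A}|\geq 2|A|-1$. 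Dilating by $d^{-1}$ preserves sum-freeness and is precisely the dilation ambiguity of the conclusion, so one may assume $d=1$; then $P_A,P_{2A}$ are integer intervals. Setting $I:=\Z/p\Z\setminus P_{2A}$ gives an interval of length $n\leq p-2|A|+1$, matching the length of the target interval $[|A|,p-|A|]$. Sum-freeness yields $A\cap P_{2A}\subseteq A\cap 2A=\emptyset$, so $A\subseteq I$, and writing $I=[i,i+n-1]$ the problem reduces to proving $i\geq|A|$ and $i+n-1\leq p-|A|$.

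For this positional step, one starts from $P_{2A}\subseteq 2A\subseteq 2I=[2i,2i+2n-2]\pmod p$. Since $P_{2A}$, as the complement of $I$, is the interval $[i+n,i+p-1]\pmod p$ of length $p-n$, its inclusion in $2I$ (of length $2n-1$) imposes the offset constraint $i\in[p-2n+1,n]$. To narrow this to the required range $[|A|,p-|A|-n+1]$, one uses the further sum-free avoidance $A\cap(2A\setminus P_{2A})=\emptyset$. The set $2A\setminus P_{2A}$ has at most $r+1$ elements and lies inside $2I\setminus P_{2A}$, which splits as a right subinterval $[2i,i+n-1]$ of $I$ of length $n-i$ (at the right end of $I$), and a left one $[i,2i+2n-2-p]$ of length $i+2n-1-p$ (at the left end). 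Meanwhile $|I\setminus A|\leq p-3|A|+1$ --- of the same order as the total chunk length $3n-1-p$ --- so $A$ fills almost all of $I$, including almost all of both extremal subintervals. Requiring that none of those fill-in points lie in $2A\setminus P_{2A}$ then forces the lengths $n-i$ and $i+2n-1-p$ to be balanced and comparable to $|I\setminus A|$, pinning $i$ near $|A|$ and $i+n-1$ near $p-|A|$.

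The main obstacle is this last quantitative step, which requires a careful balancing of the three inputs $P_{2A}\subseteq 2I$, $|I\setminus A|\leq p-3|A|+1$, and $A\cap(2A\setminus P_{2A})=\emptyset$. The $-1$-invariance of $[|A|,p-|A|]$ ensures that the argument applied to $-A$ recovers the symmetric half, so in practice only one of $i\geq|A|$ or $i+n-1\leq p-|A|$ need be proved directly by the overlap analysis. The tightness of these chunk-length estimates, together with the narrow range of $r$ permitted when $|A|>0.318\,p$, is what accounts for the specific threshold in the statement.
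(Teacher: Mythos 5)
Your front end (deriving $|A|\leq \frac{p-r}{3}$ and $|2A|\leq (2+\alpha)|A|-3$ from sum-freeness and $|A|>0.318\,p$, then applying Theorem \ref{thm:main2} and dilating so the common difference is $1$) is correct and matches the paper. The argument breaks at the reduction step, where you replace the desired conclusion about $A$ by the claim that $I:=\Z/p\Z\setminus P_{2A}=[i,i+n-1]$ satisfies $i\geq|A|$ and $i+n-1\leq p-|A|$, i.e.\ $I\subseteq[\,|A|,p-|A|\,]$. That claim is false in general, because $P_{2A}$ is merely \emph{some} progression of length $\geq 2|A|-1$ inside $2A$ and is not canonically positioned. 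Concretely, let $k$ be large, let $p=3k+m$ be prime with $4\leq m<0.14\,k$, put $a_0=k+m-1$ and $A=[a_0,a_0+k-2]\cup\{a_0+k\}$ (integer intervals read mod $p$). Then $A$ is sum-free, $|A|=k>0.318\,p$, and $2A=[2a_0,2a_0+2k-2]\cup\{2a_0+2k\}$ with $2a_0+2k\equiv a_0-1$, so $r=0$ and the only arithmetic progression of length $\geq 2k-1$ contained in $2A$ is the interval $[2a_0,2a_0+2k-2]$ (traversed in either direction; reflection $x\mapsto-x$ preserves $[k,p-k]$, so this causes no loss). Its complement is $I=[k+m-3,\,2k+2m-3]$, which has the same length $k+m+1$ as the target $[k,p-k]=[k,2k+m]$ but is shifted: $I\not\subseteq[k,p-k]$ as soon as $m\geq 4$, even though $A\subseteq[k+m-1,2k+m-1]\subseteq[k,p-k]$ as the theorem asserts. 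So even a complete execution of your positional analysis of $I$ could not prove the theorem. Moreover, that analysis is itself only a heuristic: the inputs you list ($|2A\setminus P_{2A}|\leq r+1$, $P_{2A}\subseteq 2I$, $A\cap(2A\setminus P_{2A})=\emptyset$, $|I\setminus A|\leq p-3|A|+1$) give, at the level of cardinalities, nothing beyond the bound $r\leq p-3|A|$ you already had, and ``pinning $i$ near $|A|$'' would in any case not suffice for an exact containment.

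The paper sidesteps the positioning problem by passing to the difference set instead of the sumset. From $|P_A|\leq|A|+r+1<\frac{p+1}{2}$ (note you never use $P_A$, but it is essential here) the sumset $A+A$, hence $A-A$, is rectifiable; sum-freeness gives $(A-A)\cap A=\emptyset$, so $|A-A|\leq p-|A|\leq 3|A|-4$ for the stated density, and the integer $3k-4$ Theorem applied to $\psi(A)$ and $-\psi(A)$, together with the symmetrization remark preceding Theorem \ref{thm-3k4Z}, yields $[-(|A|-1),|A|-1]\subseteq A-A$. This interval is automatically centered at $0$, so $(A-A)\cap A=\emptyset$ immediately forces $A\subseteq[\,|A|,p-|A|\,]$ with no positional bookkeeping at all. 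If you wish to salvage a sumset-based route, you would need genuinely finer information about where the elements of $2A$ near $2\min A$ and $2\max A$ sit relative to $A$, which your sketch does not supply.
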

\noindent Applying Theorem \ref{thm:main2}, we improve the constant $0.318$ to $0.313$.

The paper is laid out as follows. In Section \ref{sec:3k-4}, we prove Theorems \ref{thm:main1} and \ref{thm:main2}. Our results on $m$-sum-free sets are proved in Section \ref{sec:msf}. There, in Subsection \ref{subsec:lbs}, we present the above construction and deduce Theorem \ref{thm:lbs}. In Subsection \ref{subsec:ubs}, we apply Theorem \ref{thm:main2} to obtain Theorem \ref{thm:dm-ub1}. Finally, in Subsection \ref{subsec:sumfree} we obtain the above-mentioned improvement of Theorem \ref{thm:D&L}.

\section{New bounds toward the $3k-4$ conjecture in $\Z/p\Z$}\label{sec:3k-4}

Our first task in this section is to prove Theorem \ref{thm:main1}. We shall obtain this result as the special case $\varepsilon=3/4$ of the following theorem.

\begin{theorem}\label{fouier-nightmare-calc} Let $p$ be prime, let $0<\varepsilon\leq \frac34$ be a real number,  let $\alpha$ be the unique positive root of the cubic $4x^3+(12-4\varepsilon)x^2+(9-4\varepsilon)x+(8\varepsilon -7)$, and  let $A\subseteq \Z/p\Z$ be a nonempty subset with $|2A|=2|A|+r$. Suppose \begin{align*}  &|2A|\leq (2+\alpha)|A|-3\quad\und\quad |2A|\leq\varepsilon\, p.\end{align*}
Then there exist arithmetic progressions $P_A,P_{2A}\subseteq \Z/p\Z$ with the same difference  such that $A\subseteq P_A$, $|P_A|\leq |A|+r+1$,  $P_{2A}\subseteq 2A$, and $|P_{2A}|\geq 2|A|-1$.
\end{theorem}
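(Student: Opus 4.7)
The plan is to reduce the problem to the integer version, Theorem \ref{thm-3k4Z-sym}, via a Fourier-analytic \emph{rectification}: one finds a non-zero $\xi \in \Z/p\Z$ such that the dilate $\xi\cdot A$ lies in a short interval of $\Z/p\Z$, which may then be identified with a subset of $\Z$.

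The first step is to show, using the doubling hypothesis $|2A|=2|A|+r\leq (2+\alpha)|A|-3$, that there exists $\xi\neq 0$ with $|\widehat{1_A}(\xi)|/|A|$ close to $1$. The standard mechanism combines the sumset--energy inequality
\begin{equation*}
\sum_{\xi\in\Z/p\Z}|\widehat{1_A}(\xi)|^4 \;\geq\; \frac{p\,|A|^4}{|2A|},
\end{equation*}
which follows from Cauchy--Schwarz applied to the representation function $r_{2A}$, with Plancherel $\sum_{\xi\neq 0}|\widehat{1_A}(\xi)|^2 = |A|(p-|A|)$; the former forces the non-trivial $L^4$ mass to be large, and H\"older then yields a single $\xi\neq 0$ with $|\widehat{1_A}(\xi)|^2 \geq \frac{|A|^3(p-|2A|)}{|2A|(p-|A|)}$. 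Under the doubling and density hypotheses, this ratio is close to $|A|^2$.

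The second step uses this Fourier concentration to argue that the image $\{\xi a/p\bmod 1 : a\in A\}$ lies in a short arc of $\R/\Z$. The bound $|2A|\leq \varepsilon p$ with $\varepsilon\leq 3/4$ ensures that after a suitable translation this arc has length strictly less than $1$, so $\xi\cdot A$ lifts faithfully to an integer interval $[0,L]$ with $L<p$, the sumset being preserved without modular wrap-around. Since the cubic has its positive root $\alpha<1$, the hypothesis forces $|2A|\leq 3|A|-4$, so Theorem \ref{thm-3k4Z-sym} applies to the rectified copy to produce an arithmetic progression $P\supseteq \xi\cdot A$ with $|P|\leq |A|+r+1$. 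A refined form of the integer $3k-4$ theorem (or a separate application of Freiman's result on sumsets of integers) additionally guarantees an AP of length at least $2|A|-1$ inside the lift of $2A$. Pulling back by $\xi^{-1}$ yields the required $P_A$ and $P_{2A}$, sharing the common difference $\xi^{-1}$.

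The main obstacle is calibrating the quantitative threshold in the first step so as to match precisely the cubic $4x^3+(12-4\varepsilon)x^2+(9-4\varepsilon)x+(8\varepsilon-7)$. I expect this cubic to emerge from a three-parameter optimization balancing (i) the sumset--energy lower bound above, (ii) an arc-length estimate controlling how tightly $\xi\cdot A$ concentrates given its Fourier coefficient, and (iii) the constraint that the rectified interval of length $\sim L$ still admit a faithful copy of $2A$ of size at most $\varepsilon p$. Each ingredient produces a relation in $\alpha$ parameterised by the unknown arc length, and elimination of that length should yield a single cubic whose unique positive root is the stated threshold. Verifying that the extremiser of this optimisation is \emph{exactly} the root of $4x^3+(12-4\varepsilon)x^2+(9-4\varepsilon)x+(8\varepsilon-7)$ is the ``nightmare calculation'' promised by the label; once it is in hand, the integer rectification and the appeal to Theorem \ref{thm-3k4Z-sym} are routine, with the precise constants $-3$ and $-4$ in the hypothesis exactly absorbing the boundary contributions of the rectification.
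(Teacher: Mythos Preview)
Your Step 1 is correct and indeed appears (in contrapositive form) in the paper's argument. The genuine gap is Step 2: a large Fourier coefficient $|\widehat{1_A}(\xi)|$ does \emph{not} imply that all of $\xi\cdot A$ lies in a short arc. By Freiman's arc lemma (Lev's Theorem~1), it only implies that \emph{some} open half-arc contains a large subset $A'\subseteq A$; outliers may remain. Moreover, to rectify the sumset $A+A$ you need $\xi\cdot A$ inside an arc of length at most $(p+1)/2$, not merely ``strictly less than $1$'' as you write, so even your target is too weak.

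The paper closes this gap by a dichotomy. \textbf{Case 1:} some pair $A',B'\subseteq A$ with $|A'|+2|B'|-4\geq |2A|$ has $A'+B'$ rectifiable. One applies Theorem~\ref{thm-3k4Z} to $A'+B'$, then uses the saturation identity $-A'+\overline{A'+A}\subseteq\overline{A}$ (Lemma~\ref{lemma-dual}) and a \emph{second} application of Theorem~\ref{thm-3k4Z} to $-A'+\overline{A'+A}$ to produce a long progression inside $\overline{A}$, forcing $\ell_g(A)$ to be small enough that $A'+A$ itself is rectifiable; by maximality of $|A'|+|B'|$ this yields $A'=B'=A$. This bootstrapping from a large rectifiable subsum to full rectification is the missing structural half of your argument. \textbf{Case 2:} no such pair exists, so every half-arc meets $A$ in at most $\tfrac13|2A|+1$ points, whence Freiman's lemma gives the \emph{uniform} bound $|S_A(d)|\leq \tfrac13|A|+\tfrac23 r+2$ for all $d\neq 0$. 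Combining this with your energy inequality (equivalently \eqref{treeroot}) and $|2A|\leq\varepsilon p$ yields, after straightforward algebra, $f(\beta)>0$ for $f(x)=4x^3+(12-4\varepsilon)x^2+(9-4\varepsilon)x+(8\varepsilon-7)$, contradicting $\beta\leq\alpha$. So the cubic is not the output of a three-parameter optimisation over an unknown arc length; it drops out directly from the single inequality $\tfrac{1+2\beta}{3}>\bigl(\tfrac{p/\ell-1}{p/|A|-1}\bigr)^{1/2}$ together with $\ell\leq\varepsilon p$.
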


The proof is a modification of the argument used to prove \cite[Theorem 19.3]{Gr}, itself based on the original work of Freiman \cite{Freiman61} and incorporating  improvements to the calculations noted by R{\o}dseth \cite{Rodseth06}. The main new contribution is an argument to allow the restriction $|2A|\leq \frac12(p+3)$ from \cite[Theorem 19.3]{Gr} to be replaced by the above condition $|2A|\leq \varepsilon p$. For $\varepsilon=3/4$, this is optimal in the sense explained in the introduction.

In the proof of Theorem \ref{fouier-nightmare-calc}, we use the following version of the $3k-4$ Theorem for $\Z$. Here, for $X\subseteq \Z$, we denote the greatest common divisor $\gcd(X-X)$ by $\gcd^*(X)$. Note, for $|X|\geq 2$, that $d=\gcd^*(X)$ is the minimal $d\geq 1$ such that $X$ is contained in an arithmetic progression with difference $d$. We remark that, when $B=-A$, we have $P_{A-A}\subseteq A-A$ and $-P_{A-A}\subseteq -(A-A)=A-A$. Since $2|P_{A-A}|\geq 4|A|-2> |A-A|$, the progressions $P_{A-A}$ and $-P_{A-A}$ intersect, ensuring that $P=P_{A-A}\cup -P_{A-A}\subseteq A-A$  is a progression contained in $A-A$ with $|P|\geq 2|A|-1$ and $-P=P$. Thus the progression $P_{A-A}$ in Theorem \ref{thm-3k4Z} can be assumed to be symmetric (i.e., centered at the origin) when $B=-A$.

\begin{theorem}\label{thm-3k4Z}
Let $A,\,B\subseteq \Z$ be finite, nonempty subsets with $\gcd^*(A+B)=1$  and $$|A+B|=|A|+|B|+r\leq |A|+|B|+\min\{|A|,\,|B|\}-3-\delta,$$ where $\delta=1$ if  $x+A=B$ for some $x\in \Z$, and otherwise $\delta=0$.
Then there are arithmetic progressions $P_A,\,P_B,\,P_{A+B}\subseteq \Z$ with common difference $1$ such that $A\subseteq P_A$, $B\subseteq P_B$, $P_{A+B}\subseteq A+B$, $|P_A|\leq |A|+r+1$, $|P_B|\leq |B|+r+1$ and $|P_{A+B}|\geq |A|+|B|-1$.
\end{theorem}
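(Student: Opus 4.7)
The standard approach, due to Freiman with refinements for the two-set setting by Lev and others, proceeds by induction on $|A|+|B|$. After translating $A$ and $B$ independently so that $\min A=\min B=0$ (which preserves both $|A+B|$ and $\gcd^*(A+B)$) and swapping them if necessary so that $|A|\le|B|$, I would set $k=|A|$, $\ell=|B|$, $a=\max A$, $b=\max B$. The hypothesis $\gcd^*(A+B)=1$ together with $0\in A\cap B$ is then equivalent to $\gcd(A\cup B)=1$.

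The engine of the argument is a ``staircase'' lower bound for $|A+B|$ in terms of the endpoints $a$ and $b$. Writing $A=\{0=a_0<a_1<\cdots<a_{k-1}=a\}$ and successively adding the translates $a_i+B$ to build up $A+B$, each new translate contributes a controlled number of new elements; combining this with the trivial containment $A+B\subseteq[0,a+b]$ and the hypothesis $|A+B|\le k+\ell+\min(k,\ell)-3-\delta$ pins $a\le k+r$ and (by symmetry in a reflected problem) $b\le\ell+r$, so both $A$ and $B$ already sit inside intervals of the lengths demanded by the conclusion. To upgrade this qualitative fact to the full structural statement (with the correct progressions $P_A,P_B,P_{A+B}$), one runs the induction.

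For the inductive step I would strip the rightmost element of $A$, setting $A'=A\setminus\{a\}$. Generically one has $|A'+B|=|A+B|-1$ and $\gcd^*(A'+B)=1$ is preserved, so the hypotheses of the theorem transfer to $(A',B)$ with the same parameter $r$, induction yields progressions $P_{A'},P_B,P_{A'+B}$, and reinserting $a$ extends $P_{A'}$ by a single step to produce $P_A$ (while $P_B$ is carried over and $P_{A+B}$ is at worst extended by one). The main obstacle lies in the exceptional cases at this step: if removing $a$ collapses $\gcd^*(A'+B)>1$ then one must instead strip a different endpoint (the rightmost of $B$, or the leftmost of $A$), and when $\delta=1$ (so $B=x+A$) the translation-symmetry must be preserved by a \emph{symmetric} removal, specializing the argument to the classical Freiman $3k-4$ theorem for $2A$ where the extra unit of savings in $\delta$ is exploited. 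Tracking the quantities $r$ and $\delta$ correctly across these endpoint manipulations, and handling the base cases ($k$ small) directly, is the bulk of the bookkeeping, but the underlying ideas are all Freiman's original ones.
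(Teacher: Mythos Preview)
The paper does not actually prove this theorem: it is stated as a known background result (the $3k-4$ Theorem for $\Z$, in the two-summand form due to Freiman and Lev--Smeliansky), to be used as a black box inside the proofs of Theorems~\ref{fouier-nightmare-calc} and~\ref{thm:main2}. The implicit reference is to \cite{Gr}. So there is no ``paper's own proof'' to compare your proposal against.

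That said, your outline is the standard route to this result and is broadly correct in spirit. One point is worth flagging: the claim that ``generically $|A'+B|=|A+B|-1$ \dots\ so the hypotheses transfer to $(A',B)$ with the same parameter $r$'' is not the delicate part. In fact $|A'+B|\le|A+B|-1$ always, so $r'\le r$; the real obstruction is that after deleting an element of $A$ the threshold $\min(|A'|,|B|)-3-\delta'$ drops by one as well, so you need either $r'\le r-1$ or a favourable change in $\delta$, and neither is automatic. The actual proofs (Freiman's original, and the treatments in Lev--Smeliansky and in \cite{Gr}) handle this by a more careful choice of which endpoint to remove, together with a separate analysis of the configurations where every such removal fails; your sketch gestures at this (``exceptional cases'') but underestimates the work involved. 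The idea is right, the bookkeeping is heavier than you suggest.
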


Let $G$ and $G'$ be abelian groups and let $A,\,B\subseteq G$. A \emph{Freiman isomorphism} is a well-defined map $\psi:A+B\rightarrow G'$ defined by two coordinate maps $\psi_A:A\rightarrow G'$ and $\psi_B:B\rightarrow G'$ such that $\psi(x+y)=\psi_A(x)+\psi_B(y)$ for all $x\in A$ and $y\in B$. That $\psi$ is well-defined is equivalent to the statement that $\psi_A(x_1)+\psi_B(y_1)=\psi_A(x_2)+\psi_B(y_2)$ whenever $x_1+y_1=x_2+y_2$, for $x_1,\,x_2\in A$ and $y_1,\,y_2\in B$, and $\psi_A(A)+\psi_B(B)$ is then the homomorphic image of $A+B$. It is an isomorphism if $\psi$ is injective on $A+B$, which is equivalent to $\psi_A(x_1)+\psi_B(y_1)=\psi_A(x_2)+\psi_B(y_2)$ holding if and only if $x_1+y_1=x_2+y_2$, for $x_1,\,x_2\in A$ and $y_1,\,y_2\in B$. We denote this by $A+B\cong \psi_A(A)+\psi_B(B)$. A Freiman homomorphism  $\psi:A+B\rightarrow G'$ on the sumset defines a Freiman homomorphism $\psi':A-B\rightarrow G'$ on the difference set given by $\psi'(x-y)=\psi_A(x)-\psi_B(y)$, for $x\in A$ and $y\in B$, which is an isomorphism when $\psi$ is. In the special case when $A=B$, we find that $\psi_A(x)+\psi_B(y)=\psi_A(y)+\psi_B(x)$ for all $x,\,y\in A=B$, implying $\psi_B(x)=\psi_A(x)+(\psi_B(y)-\psi_A(y))$ for any $x,\,y\in A=B$. Fixing $y\in A$ and letting $x$ range over all possible $x\in A$ shows that the map $\psi_B$ is simply a translate of the map $\psi_A$. This means it can (and generally will) be assumed  that $\psi_A=\psi_B$  for a Freiman homomorphism $\psi$ when $A=B$.
 See \cite[Chapter 20]{Gr} for a fuller discussion regarding Freiman homomorphisms.

For a prime $p$,  nonzero $g\in \Z/p\Z$ (which is then a generator of $\Z/p\Z$), and integers $m\leq n$, let $$[m,n]_g=\{mg,(m+1)g,\ldots,ng\}$$ denote the corresponding interval in $\Z/p\Z$. If $m>n$, then $[m,n]_g=\emptyset$. We define (for each $g\in \Z/p\Z\setminus\{0\}$) a function $\ell_g$ from the set of subsets $X\subset \Z/p\Z$ to $\mb{Z}_{\geq 0}$, by
\[
\ell_g(X):= \min\{|P|: P\textrm{ is an arithmetic progression of difference $g$ with }X\subset P\}.
\]
We let $\overline{X}=(\Z/p\Z)\setminus X$ denote the complement of $X$ in $\Z/p\Z$.
We say that a sumset $A+B\subseteq \Z/p\Z$ \emph{is rectifiable} if $\ell_g(A)+\ell_g(B)\leq p+1$ for some nonzero $g\in \Z/p\Z$. In such case, $A\subseteq a_0+[0,m]_g$ and $B\subseteq b_0+[0,n]_g$ with $m+n=\ell_g(A)+\ell_g(B)-2\leq p-1$, for some $a_0,\,b_0\in \Z/p\Z$, in which case the maps $a_0+sg\mapsto s$ and $b_0+tg\mapsto t$, for $s,\,t\in \Z$, when restricted to $A$ and $B$ respectively, show that the sumset $A+B$ is Freiman isomorphic (see \cite[Section 2.8]{Gr}) to an integer sumset. This allows us to canonically apply results from $\Z$ to the sumset $A+B$.

If $G$ is an abelian group and $A,\,B\subseteq G$ are  subsets, then we say that $A$ is \emph{saturated} with respect to $B$ if $(A\cup \{x\})+B\neq A+B$ for all $x\in \overline A$. In the proof of Theorem \ref{fouier-nightmare-calc}, we shall also use the following basic result regarding saturation \cite[Lemma 7.2]{Gr}, whose earlier form dates back to Vosper \cite{vosper}. We include the short proof for completeness.

\begin{lemma}\label{lemma-dual}
Let $G$ be an abelian group and let $A,\,B\subseteq G$ be subsets. Then $$-B+\overline{A+B}\subseteq \overline A$$ with equality holding if and only if $A$ is saturated with respect to $B$.
\end{lemma}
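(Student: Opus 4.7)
The statement is a straightforward unwinding of the definitions of sumset, complement, and saturation, so the plan is to keep the argument correspondingly brief and conceptual.

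For the inclusion, I would take an arbitrary element $x \in -B + \overline{A+B}$, write it as $x = -b + c$ with $b \in B$ and $c \in \overline{A+B}$, and argue by contradiction: if $x \in A$ then $c = x + b \in A + B$, against the choice of $c$. Hence $x \in \overline A$, proving $-B + \overline{A+B} \subseteq \overline A$ with no hypothesis on $A$ or $B$.

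For the equality clause, I would establish both implications by translating the two sides into equivalent statements about individual elements of $\overline A$. The key observation is that, for $x \in \overline A$, the difference $((A\cup\{x\}) + B) \setminus (A+B)$ is exactly $(x + B) \cap \overline{A+B}$, so $(A\cup\{x\}) + B \neq A+B$ if and only if there exist $b \in B$ and $c \in \overline{A+B}$ with $x + b = c$, i.e., if and only if $x \in -B + \overline{A+B}$. Therefore $A$ is saturated with respect to $B$ exactly when every $x \in \overline A$ lies in $-B + \overline{A+B}$, which, combined with the inclusion already proved, is equivalent to equality $-B + \overline{A+B} = \overline A$.

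There is no genuine obstacle here; the only care needed is to make sure the quantifier structure is handled correctly when passing between "$A \cup \{x\}$ adds a new element to $A+B$" and "$x \in -B + \overline{A+B}$," and to note that the forward inclusion requires no hypothesis so that the equivalence characterizing saturation is clean.
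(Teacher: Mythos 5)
Your proof is correct and follows essentially the same route as the paper's: the forward inclusion by the same contradiction argument, and the saturation equivalence by unwinding $(A\cup\{x\})+B\neq A+B$ into the existence of $b\in B$, $c\in\overline{A+B}$ with $x+b=c$. The only cosmetic difference is that you establish the elementwise equivalence once via $((A\cup\{x\})+B)\setminus(A+B)=(x+B)\cap\overline{A+B}$ and read off both implications, whereas the paper argues the two directions separately.
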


\begin{proof}
First observe that $-B+\overline{A+B}\subseteq \overline A$, for if $b\in B$, $z\in \overline{A+B}$ and by contradiction $-b+z=a$ for some $a\in A$, then $z=a+b\in A+B$, contrary to its definition.
If $A$ is saturated with respect to $B$, then given any $x\in \overline A$, there exists some $b\in B$ and $z\in \overline{A+B}$ with $x+b=z$, whence $x=-b+z\in -B+\overline{A+B}$. This shows that $\overline{A}\subseteq -B+\overline{A+B}$, and as the reverse inclusion always holds (as just shown), it follows that $\overline A=-B+\overline{A+B}$. Conversely, if $\overline A=-B+\overline{A+B}$, then given any $x\in \overline A$, there exists some $b\in B$ and $z\in \overline{A+B}$ with $x=-b+z$, implying $x+b=z\notin A+B$. Since $x\in \overline A$ is arbitrary, this shows that $A$ is saturated with respect to $B$.
\end{proof}

\begin{proof}[Proof of Theorem \ref{fouier-nightmare-calc}]

Let $f(x)=4x^3+(12-4\varepsilon)x^2+(9-4\varepsilon)x+(8\varepsilon -7)$, so that $f'(x)=12x^2+(24-8\varepsilon)x+(9-4\varepsilon)$.  Then  $f'(x)>0$  for $x\geq 0$ (in view of $\varepsilon\leq 3/4$), meaning $f(x)$ is an increasing function for $x\geq 0$ with $f(0)=8\varepsilon-7<0$ and $f(1/2)=1+5\varepsilon>0$. Consequently, $f(x)$ has a unique positive root $0<\alpha<\frac12$.

Since $|2A|\leq\varepsilon p<p$, the Cauchy-Davenport Theorem implies $r\geq -1$.
Let \be\label{oneone}\beta=\frac{r+3}{|A|}>0,\ee so that \be \label{onetwo} r=\beta|A|-3,\quad |2A|=2|A|+r=(2+\beta)|A|-3\quad\und\quad\beta\leq \alpha<\frac12.\ee

Since $2|A|+r=|2A|\leq \varepsilon p\leq\frac34 p$, it follows that $|A|\leq \frac38p-\frac12 r$, and since $r\geq -1$ we deduce that
\be\label{betabound}
|A|\leq \frac{3p+4}{8}.
\ee

The proof naturally breaks into two parts:  a first case where there is a large rectifiable sub-sumset, and a second case where there is not. The latter case will lead to a contradiction.

\medskip\noindent\textbf{Case 1: } Suppose there exist subsets $A'\subseteq A$ and $B'\subseteq A$ with $|B'|\leq |A'|$  and \be\label{rectify-large-bound-sym} |A'|+2|B'|-4\geq |2A|\ee
such that  $A'+B'$ is rectifiable. Furthermore,  choose a pair of subsets  $A'\subseteq A$ and $B'\subseteq A$ with these properties such that  $|A'|+|B'|$ is maximal, and for these subsets $A'$ and $B'$, let $g\in \Z/p\Z$ be a nonzero difference with $\ell_g(A')+\ell_g(B')\leq p+1$  minimal. Note $|A'|\geq |B'|\geq 2$; indeed, if  $|B'|\leq 1$, then combining this with the hypotheses $|B'|\leq |A'|\leq |A|$ and \eqref{rectify-large-bound-sym} yields the contradiction $|A|-2\geq |2A|\geq |A|$.
  Since $A'+B'$ is rectifiable, the Cauchy-Davenport Theorem for $\Z$ \cite[Theorem 3.1]{Gr} ensures  $$|A'+B'|=|A'|+|B'|+r'\quad\mbox{ with $r'\geq -1$}.$$  Moreover, we have \be\label{ABC-intervals-sym}A'\subseteq P_A:=a_0+[0,m]_g,\quad B'\subseteq P_B:=b_0+[0,n]_g,\quad\und\quad A'+B'\subseteq a_0+b_0+[0,m+n]_g\ee
 with $a_0,\,a_0+mg\in A'$, \ $b_0,\,b_0+ng\in B'$ and  $m+n\leq p-1$, for some $a_0,\,b_0\in \Z/p\Z$. Then, since $A'+B'$ is rectifiable, it follows that the map $\psi:\Z/p\Z\rightarrow [0,p-1]\subseteq \Z$ defined by $\psi(sg)=s$  for $s\in [0,p-1]$, gives a Freiman isomorphism of $A'+B'$ with the integer sumset $\psi(-a_0+A')+\psi(-b_0+B')\subseteq \Z$. Observe that $${\gcd}^*(\psi(-a_0+A')+\psi(-b_0+B'))=1,$$ since if $\psi(-a_0+A')+\psi(-b_0+B')$ were contained in an arithmetic progression with difference $d\geq 2$, then this would also be the case for $\psi(-a_0+A')$ and $\psi(-b_0+B')$, and then $\ell_{dg}(A')+\ell_{dg}(B')<\ell_g(A')+\ell_g(B')$ would follow in view of $|A'|\geq |B'|\geq 2$, contradicting the minimality of $\ell_g(A')+\ell_g(B')$ for $g$.

In view of \eqref{rectify-large-bound-sym} and  $|B'|\leq |A'|$, we have $|A'+B'|\leq |2A|\leq |A'|+|B'|+\min\{|A'|,\,|B'|\}-4$. Thus, since  $\gcd^*(\psi(-a_0+A')+\psi(-b_0+B'))=1$, we can apply the $3k-4$ Theorem (Theorem \ref{thm-3k4Z}) to the isomorphic sumset $\psi(-a_0+A')+\psi(-b_0+B')$. Then, letting $P_{A}=a_0+[0,m]_g$, \ $P_{B}=b_0+[0,n]_g$ and $P_{A+B}\subseteq A'+B'$ be the resulting arithmetic progressions with common difference $g$, we conclude that \be |P_A\setminus A'|\leq r'+1\quad \und  \quad |P_B\setminus B'|\leq r'+1.\label{rolling-a-sym}\ee  If $A'=A$ and $B'=A$, then the original sumset $2A$ is rectifiable, we have $r'=r$, and the theorem follows with $P_A=P_B$ and $P_{2A}=P_{A+B}$ as just defined. Therefore we can assume otherwise, which in view of $|B'|\leq |A'|$ means  \be\label{AorBnontempty-sym}\quad A\setminus B'\neq \emptyset.\ee

Let $\Delta=|2A|- |A'+B'|\geq 0$. Then
 \be\label{r'-mainbound-sym} r'= |A\setminus A'|
 +|A\setminus B'|+r-\Delta.\ee
Since $|A'|+|B'|+r'=|A'+B'|=|2A|-\Delta$,  it follows
 from
 \eqref{rectify-large-bound-sym} and $|B'|\leq |A'|$ that
 \be\label{1st-r'} r'\leq |B'|-4-\Delta\quad\und\quad r'\leq |A'|-4-\Delta.\ee
 Averaging both bounds in \eqref{1st-r'}, using \eqref{r'-mainbound-sym}, and recalling that $|2A|=2|A|+r$, we obtain  \be\label{r'-special-sym} r'\leq \frac13|2A|-\frac83-\Delta.\ee

\subsection*{Step A} $|-A'+\overline{A'+A}|\leq |\overline{A'+A}|+2|A'|-4$.

\begin{proof}
If Step A fails, then combining its failure with $p-|2A|=|\overline{2A}|\leq |\overline{A'+A}|$ and Lemma \ref{lemma-dual} yields  $$p-|2A|+2|A'|-3\leq |\overline{A'+A}|+2|A'|-3\leq |-A'+\overline{A'+A}|\leq |\overline {A}|=p-|A|,$$
which implies that $|A|+2|A'|-3\leq |2A|$. This together with \eqref{rectify-large-bound-sym} and $|B'|\leq |A'|\leq |A|$ implies
$|A|+2|A'|-3\leq |A'|+2|B'|-4\leq |A|+2|A'|-4$, which is not possible.
\end{proof}

\subsection*{Step B} $|-A'+\overline{A'+A}|\leq |A'|+2|\overline{A'+A}|-3$.

\begin{proof}
If Step B fails, then combining its failure with $2p-4|A|-2r=2|\overline{2A}|\leq 2|\overline{A'+A}|$ and Lemma \ref{lemma-dual} yields  $$|A'|+2p-4|A|-2r-2\leq |A'|+2|\overline{A'+A}|-2\leq |-A'+\overline{A'+A}|\leq |\overline {A}|=p-|A|.$$ Collecting terms in the above inequality, multiplying by $2$, and applying the estimates $|B'|\leq |A'|$ and \eqref{r'-special-sym} yields \begin{align*}2p&\leq 6|A|+4r-2|A'|+4\leq 3|2A|+r-|A'|-|B'|+4\\&=3|2A|-|A'+B'|+r+r'+4=2|2A|+\Delta+r+r'+4
\; \leq \frac{7}{3}|2A|+r+\frac43.\end{align*}
Hence $|2A|\geq \frac67p-\frac37 r-\frac47$. Combined with \eqref{onetwo} and \eqref{betabound}, we conclude that
\[
\frac67p-\frac37\alpha\Big(\frac{3p+5}{8}\Big)+\frac57
<\frac67p-\frac37\beta|A|+\frac57=\frac67p-\frac37 r-\frac47\leq |2A|\leq \varepsilon p\leq \frac34 p,
\]
which yields the contradiction $0<(\frac67-\frac34-\frac{9}{56}\alpha)p
<\frac{15}{56}\alpha-\frac57<0$ (in view of $\alpha<\frac12$), completing Step B.
\end{proof}

By our application of the $3k-4$ Theorem (Theorem \ref{thm-3k4Z}) to $\psi(-a_0+A')+\psi(-b_0+B')$, we know that $A'+B'$ contains an arithmetic progression $P_{A+B}$ with difference $g$ and length $|P_{A+B}|\geq |A'|+|B'|-1$, which implies $$\ell_g(\overline{A'+B'})\leq p-|A'|-|B'|+1.$$
By \eqref{rolling-a-sym} and  \eqref{1st-r'}, we obtain \be\label{A'length}\ell_g(-A')=\ell_g(A')\leq |A'|+r'+1\leq |A'|+|B'|-3,\ee whence $\ell_g(-A')+\ell_g(\overline{A'+B'})\leq p-2$, ensuring $-A'+\overline{A'+B'}$ is rectifiable via the difference $g$. Since $\overline{A'+A}\subseteq \overline{A'+B'}$, it follows that $-A'+\overline{A'+A}$ is also rectifiable via the difference $g$.

By our application of the $3k-4$ Theorem (Theorem \ref{thm-3k4Z}) to $\psi(-a_0+A')+\psi(-b_0+B')$, we know $\psi(-a_0+A')$ is contained in the arithmetic progression $\psi(-a_0+P_A)=[0,m]$  with difference $1$ and length $|P_A|\leq |A'|+r'+1$, with the latter inequality by \eqref{rolling-a-sym}. Moreover, $r'+1\leq |B'|-3\leq |A'|-3$ (by \eqref{1st-r'}), so that $|A'|> \lceil\frac12|P_A|\rceil$, meaning $\psi(-a_0+A')$ must contain at least $2$ consecutive elements. Hence \be\label{gcd-a}{\gcd}^*(\psi(-a_0+A'))=1.\ee

Since $-A'+\overline{A'+A}$ is rectifiable via the difference $g$, it is then  isomorphic to the integer sumset $\psi(a_0+mg-A')+\psi(x+\overline{A'+A})$ for an appropriate $x\in \Z/p\Z$. Hence, in view of  \eqref{gcd-a}, Step A and Step B, we can apply the $3k-4$ Theorem (Theorem \ref{thm-3k4Z}) to the isomorphic sumset $\psi(a_0+mg-A')+\psi(x+\overline{A'+A})$ and thereby conclude that there is an arithmetic progression $P\subseteq -A'+\overline{A'+A}$ with difference $g$ and length $|P|\geq |A'|+|\overline{A'+A}|-1\geq |A'|+|\overline{2A}|-1=p-|2A|+|A'|-1$. Consequently, since Lemma \ref{lemma-dual} ensures that $P\subseteq -A'+\overline{A'+A}\subseteq \overline{A}$, it follows that $\ell_g(A)\leq |2A|-|A'|+1$. Combined with \eqref{A'length}, we find that
\be\label{rectifygo}\ell_g(A')+\ell_g(A)\leq |2A|+r'+2.\ee

If $A'+A$ is not rectifiable, then $\ell_g(A')+\ell_g(A)\geq p+2$, hence by \eqref{r'-special-sym} and \eqref{rectifygo} we have $p\leq |2A|+r'\leq \frac43|2A|-\frac83$, whence $|2A|\geq \frac34p+2>\varepsilon p$, contrary to hypothesis. Therefore  $A'+A$ is rectifiable. This contradicts the maximality of $|A'|+|B'|$ since by \eqref{AorBnontempty-sym} we have $|A|>|B'|$, which  completes Case 1.

\medskip\noindent\textbf{Case 2:} Every  pair of subsets $A'\subseteq A$  and $B'\subseteq A$ with $|B'|\leq |A'|$ whose sumset $A'+B'$ is rectifiable has \be\label{rectify-smalll-bound} |A'|+2|B'|\leq |2A|+3.\ee

Let $\ell:=|2A|=2|A|+r$. For the rest of this proof, let us identify $\Z/p\Z$ with the set of integers $[0,p-1]$ with addition mod $p$. Then, for every $X\subseteq \Z/p\Z$ and $d\in \Z/p\Z$, we define the exponential sum $S_X(d)=\sum_{x\in X}e^{\frac{2\pi i}{p}dx} \in \C$.

The idea is to use Freiman's estimate \cite[Theorem 1]{Lev} for such sums to show that the assumption \eqref{rectify-smalll-bound} implies
\be
\label{A-B-bound} |S_A(d)|\leq \tfrac{1}{3}|A|+\tfrac{2}{3} r+2\quad \mbox{for all nonzero }\; d\in \Z/p\Z.
\ee

For any $u\in [0,2\pi)$, consider the open arc $C_u=\{e^{i x}:\; x\in (u,u+\pi)\}$ of length $\pi$ in the unit circle in $\C$. Let $A'=\{x\in A:e^{\frac{2\pi i}{p}d x}\in C_u\}$. Since the set of $p$-th roots of unity contained in $C_u$ correspond to an arithmetic progression of difference 1 in $\Z/p\Z$, it is clear that, for $d^*$ the multiplicative inverse of $d$ modulo $p$, we have $\ell_{d^*}(A')\leq \frac{p+1}{2}$. Hence the sumset $A'+A'$ is rectifiable.
 Then the assumption \eqref{rectify-smalll-bound} implies that $3|A'|\leq |2A|+3$. This shows that every open half arc of the unit circle contains at most $n=\frac13 |2A|+1$ of the $|A|$ terms involved in the sum $S_A(d)$. By \cite[Theorem 1]{Lev} applied with this $n$, $N=|A|$, and $\varphi=\pi$, we obtain $|S_A(d)|\leq 2n-N = \frac23 |2A|+2 - |A|$, and \eqref{A-B-bound} follows.

To complete the proof, we now exploit \eqref{A-B-bound} to obtain a contradiction, using in particular the following manipulations which are standard in the additive combinatorial use of Fourier analysis (e.g. \cite[pp. 290--291]{Gr})

By Fourier inversion and the fact that $S_A(0)=|A|$ and $S_{2A}(0)=\ell$, we have
\ber
|A|^2p & = & \Summ{x\in \Z/p\Z}S_A(x)S_A(x)\overline{S_{2A}(x)}
\; = \; S_A(0)S_A(0)\overline{S_{2A}(0)}+\Summ{x\in (\Z/p\Z)\setminus \{0\}}S_A(x)S_A(x)\overline{S_{2A}(x)}\nn \\
&=& |A|^2\ell+\Summ{x\in (\Z/p\Z)\setminus \{0\}}S_A(x)S_A(x)\overline{S_{2A}(x)}\; \leq \;  |A|^2\ell+\Summ{x\in (\Z/p\Z)\setminus\{0\}}|S_A(x)||S_A(x)||S_{2A}(x)| \nn \\
&\leq & |A|^2\ell+ (\tfrac13|A|+\tfrac23 r+2) \Summ{x\in (\Z/p\Z)\setminus\{0\}}|S_A(x)||S_{2A}(x)|\nn.
\eer
This last sum is at most
 $\Big(\Summ{x\in \Z/p\Z\setminus\{0\}}|S_A(x)|^2\Big)^{1/2}\Big(\Summ{x\in \Z/p\Z\setminus \{0\}}|S_{2A}(x)|^2\Big)^{1/2}$ by the Cauchy-Schwarz inequality. We thus conclude that
 \[
|A|^2p \leq |A|^2\ell+\frac{|A|+2 r+6}{3} (|A|p-|A|^2)^{1/2}(\ell p-\ell^2)^{1/2}.
\]
Rearranging this inequality, we obtain
 \be \label{treeroot}
\frac{|A|+2 r+6}{3|A|}\geq
\frac{|A|(p-\ell)}{|A|^{1/2}(p-|A|)^{1/2}\ell^{1/2}(p-\ell)^{1/2}}=\left( \frac{\frac{p}{\ell}-1}{\frac{p}{|A|}-1}\right)^{1/2}.
\ee
By hypothesis $r= \beta|A|-3$, and $\ell=|2A|=(2+\beta)|A|-3$, so $|A|= \frac{\ell+3}{2+\beta}>\frac{\ell}{2+\beta}$. Using these estimates in  \eqref{treeroot} yields
\be
\nn \frac{1+2\beta}{3}= \frac{|A|+2(\beta|A|-3)+6}{3|A|} =  \frac{|A|+ 2 r+6}{3|A|}\geq
\left( \frac{\frac{p}{\ell}-1}{\frac{p}{|A|}-1}\right)^{1/2}> \left( \frac{\frac{p}{\ell}-1}{(2+\beta)\frac{p}{\ell}-1}\right)^{1/2}.
\ee
Rearranging the above inequality yields (in view of $0<\beta\leq \alpha<1$)
 \ber\label{rhs} \varepsilon p\geq \ell>\frac{1-(\frac{1+2\beta}{3})^2(2+\beta)}{1-(\frac{1+2\beta}{3})^2}p.
 \eer
Since $\beta\leq \alpha<1$, rearranging the above inequality yields
\be\label{contra}
4\beta^3+(12-4\varepsilon)\beta^2+(9-4\varepsilon)\beta+8\varepsilon -7>0.\ee  Thus $f(\beta)>0$, with $f(x)=4x^3+(12-4\varepsilon)x^2+(9-4\varepsilon)x+8\varepsilon -7$. As noted at the start of the proof, $f(x)$ is increasing for $x\geq 0$ with a unique positive root $\alpha$. As a result, \eqref{contra} ensures that $\beta>\alpha$, which is contrary to hypothesis, completing the proof.
\end{proof}

\begin{remark}
\textup{Our restriction $|2A|\leq \frac34 p$ in Theorem \ref{fouier-nightmare-calc} could be relaxed somewhat further, but at increasingly greater cost to the resulting constant $\alpha$. One simply needs to strengthen the hypothesis of \eqref{rectify-large-bound-sym} and appropriately adjust the Fourier analytic calculation in Case 2 in the above proof, using the correspondingly weakened inequality for \eqref{rectify-smalll-bound}.}
\end{remark}

\begin{proof}[Proof of Theorem \ref{thm:main1}]
As mentioned earlier, Theorem \ref{thm:main1} is just the special case of Theorem \ref{fouier-nightmare-calc} with $\varepsilon=\frac34$.
\end{proof}

We now proceed to prove the variant that we shall apply in the next section.

\begin{proof}[Proof of Theorem \ref{thm:main2}]
The proof is very close to that of Theorem \ref{fouier-nightmare-calc}, with the most significant difference occurring in Case 2. We only highlight the few differences in the argument.

First observe that, if $p=2$, then $|2A|<p$ forces $|A|=1$, in which case the theorem holds trivially. Therefore we can assume $p\geq 3$.
 Next observe (via Taylor series expansion) that $p\sin(\pi/p)$ is an increasing function for $p>1$ with limit $\pi$. The function $\eta/\sin(\pi\eta/3)$ is also an increasing function for $\eta\in (0,1)$. Thus
$\alpha\leq-\frac54+\frac14\sqrt{9+8\pi/\sin(\pi/3)}<0.3$. By hypothesis, $|A|\leq \frac{p-r}{3}=\frac13p-\frac13\beta|A|+1$, implying \be\label{beta} |A|\leq \frac{p+3}{\beta+3}<\frac{p+3}{3},\ee which replaces \eqref{betabound} for the proof. Also, $|2A|=2|A|+r\leq 2(\frac{p-r}{3})+r=\frac{2p+r}{3}$.

At the end of Step B in Case 1, we instead obtain $\frac67p-\frac37 r-\frac47\leq |2A|\leq \frac{2p+r}{3}$, which implies
\[
\frac23p\geq \frac67p-\frac{16}{21}r-\frac47\geq  \frac67p-\frac{16}{21}\alpha|A|+\frac{16}{7}-\frac47> \frac67p-\frac{16}{21}\alpha\Big(\frac{p+3}{3}\Big)+\frac{16}{7}-\frac47,
\]
with the final inequality above in view of \eqref{beta}. Thus $0<(\frac67-\frac23-\frac{16}{63}\alpha)p<\frac{16}{21}\alpha-\frac{12}{7}<0$ (in view of $0<\alpha<0.3$), which is the contradiction that instead completes Step B.

At the end of Case 1, we instead likewise obtain
\[
\frac34p+2\leq |2A|\leq \frac{2p+r}{3}\leq \frac23 p+\frac13\alpha|A|-1<\frac23p+\frac13\alpha\Big(\frac{p+3}{3}\Big)-1.
\]
This yields the contradiction $0<(\frac34-\frac23-\frac{\alpha}{9})p<\frac{\alpha}{3}-3<0$ (in view of $0<\alpha<0.3$) in order to complete Case 1.

\medskip

For Case 2, we begin by following the argument that proves \eqref{A-B-bound}, except that we use Lev's sharper estimate \cite[Theorem 2]{Lev} instead of \cite[Theorem 1]{Lev}. Thus, using that any two distinct terms in $S_A$ have the shortest arc between them of length at least $\delta=2\pi/p$, we obtain by \cite[Theorem 2]{Lev} applied with $n=\frac{1}{3}|2A|+1\leq p/2$ (so $\delta n\leq \pi$) that, for every such nonzero $d$, we have
\be\label{last}|S_A(d)|\leq \frac{\sin\Big(\big(\frac13 |2A|+1-\tfrac{1}{2}|A|\big)\tfrac{2\pi}{p}\Big)}{\sin(\frac{\pi}{p})}  = \frac{\sin\big((\frac13|A|+\frac23 r+2)\tfrac{\pi}{p}\big)}{\sin(\frac{\pi}{p})}.
\ee
Let $M=\frac13|A|+\frac23 r+2$, and let $y=M/p$. Note $M\leq (\frac13+\frac23\alpha)|A|<(\frac13+\frac23(0.3))\frac{p+3}{3}<\frac{p}{2}$ in view of $r\leq \alpha|A|-3$ and \eqref{beta}, ensuring $y\in (\frac{\eta}{3},\frac{1}{2})$. Then the  inequality in \eqref{last} becomes $|S_A(d)|\leq\frac{\sin(y \pi)}{y p \sin(\frac{\pi}{p})}\,M$. The function $f(p,y)=\frac{\sin(y\pi)}{y p \sin(\frac{\pi}{p})}$  is decreasing in $y\in (0,1/2)$ for any fixed $p\geq 3$, as can be seen by considering the taylor series expansion of its partial derivative. It is also decreasing in $p$ for every fixed $y\in (0,1/2)$ by a similar analysis. Letting $\gamma= f(p,\frac{\eta}{3})>0$, we can therefore replace \eqref{A-B-bound} by the  bound
\be\label{A-B-bound2}
|S_A(d)|\leq \gamma (\tfrac{1}{3}|A|+\tfrac{2}{3}r+2).
\ee
Since $M\frac{\pi}{p}<\frac{\pi}{2}$, $M>1$ and $p\geq 3$, it follows that $\sin(M\frac{\pi}{p})-M\sin(\frac{\pi}{p})\leq 0$ (as can be seen by considering derivatives with respect to $M$ and using the Taylor series expansion of $\tan(\frac{\pi}{p})$ to note $\tan(\frac{\pi}{p})>\frac{\pi}{p}$). Consequently, we see that the bound in \eqref{last} is at most $M$, ensuring  $\gamma\leq 1$.
We now obtain the following inequality instead of \eqref{treeroot}:
\be \label{treeroot2}
\gamma\frac{1+2\beta}{3}= \frac{\gamma (\frac13|A|+\frac23 r+2)}{|A|}\geq
\frac{|A|(p-\ell)}{|A|^{1/2}(p-|A|)^{1/2}\ell^{1/2}(p-\ell)^{1/2}}=\left( \frac{\frac{p}{\ell}-1}{\frac{p}{|A|}-1}\right)^{1/2}.
\ee
A similar rearrangement as the one that yielded \eqref{rhs} now leads to
\be\label{rhs-ii}\frac{2p+\frac{\beta}{3+\beta}(p+3)-3}{3}\geq \frac{2p+\beta|A|-3}{3}=\frac{2p+r}{3}\geq |2A|>\frac{1-\gamma^2(\frac{1+2\beta}{3})^2(2+\beta)}{1-\gamma^2(\frac{1+2\beta}{3})^2}p,
\ee
with the first inequality following from \eqref{beta}.
Since $0\leq \beta<1$ and $0<\gamma\leq 1$, we have $\frac{\beta}{3+\beta}<1$ and also $1-\gamma^2(\frac{1+2\beta}{3})^2>0$, so \eqref{rhs-ii} implies
$
\Big(\frac{\beta+2}{\beta+3}\Big)\Big(1-\gamma^2\Big(\frac{1+2\beta}{3}\Big)^2\Big)
>1-\gamma^2\Big(\frac{1+2\beta}{3}\Big)^2(2+\beta)$.
Multiplying both sides by $\beta+3>0$ and grouping on the left side the terms involving $\gamma$, we obtain  $(\beta+2)^2\gamma^2\Big(\frac{1+2\beta}{3}\Big)^2>1$. Taking square roots and expanding, we deduce $
2\beta^2+5\beta+2-3\gamma^{-1}>0$. The quadratic formula thus implies that either $\beta<\frac{-5-\sqrt{9+24\gamma^{-1}}}{4}<0$ or $\beta>\frac{-5+\sqrt{9+24\gamma^{-1}}}{4}=\alpha$. Since $\beta>0$, this contradicts the hypothesis $\beta\leq \alpha$, completing the proof.
\end{proof}

\section{Bounds for $m$-sum-free sets in $\Z/p\Z$}\label{sec:msf}

In this section, we give new bounds for the quantity $d_m(\Z/p\Z)$ defined in formula \eqref{eq:def-d_m} and for the associated limit
\[
d_m=\lim_{\substack{p\to\infty\\ p\textrm{ prime}}} d_m(\Z/p\Z).
\]
In Subsection \ref{subsec:lbs} we present some examples of large $m$-sum-free sets, and in Subsection \ref{subsec:ubs} we apply Theorem \ref{thm:main2} to give a new upper bound for $d_m(\Z/p\Z)$. In Subsection \ref{subsec:sumfree} we obtain an improvement of Theorem \ref{thm:D&L}.

\subsection{Lower bounds for $d_m(\Z/p\Z)$}\label{subsec:lbs}\hfill\\
As mentioned in the introduction, a simple example of a large $m$-sum-free set is the interval $(\frac{2}{m^2-4}p,\frac{m}{m^2-4}p)$, having asymptotic density  $\frac{1}{m+2}$ as $p\to\infty$. This gives the largest known size of $m$-sum-free sets for $m\leq 6$, but not for greater values of $m$. Indeed, there is the following construction, following an idea due to Tomasz Schoen. The version given below incorporates a suggestion of the anonymous referee that, with some additional modification, yielded the result as stated below. As noted in the proof, for fixed $m$, the constant $\frac18$ can be improved by a small factor tending to $0$ as $m\rightarrow \infty$.
\begin{lemma}\label{Schoen}
For each integer $m\geq 7$, we have $d_m(\Z/p\Z)\geq \frac18
(1-\frac{1}{p})$ for every prime $p$ of the form $p=4m^2n+1$. In particular, $d_m\geq \frac18$.
\end{lemma}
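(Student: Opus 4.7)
My goal is to exhibit, for each prime $p=4m^2n+1$ with $m\geq 7$, an explicit $m$-sum-free set $A\subseteq\Z/p\Z$ with $|A|\geq (p-1)/8$. Given such a construction, the assertion $d_m\geq \tfrac18$ follows from Dirichlet's theorem on primes in arithmetic progressions: since $\gcd(1,4m^2)=1$, there are infinitely many primes $p\equiv 1\pmod{4m^2}$, and along this subsequence the density $\tfrac18(1-1/p)$ tends to $\tfrac18$.

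The construction is motivated by the $m=6$ case, where the single interval $A=(p/16,\,3p/16)\cap\Z$ is already $6$-sum-free and has density approaching $\tfrac18$. This works because the dilate $6A\pmod p$ is a scaled arithmetic progression with common difference $6$ in $\Z/p\Z$, whose unique large "gap" (of length approximately $p-6|A|$) contains the entire interval $2A=(p/8,\,3p/8)\cap \Z$. For $m\geq 7$ a single-interval construction only attains density $1/(m+2)<\tfrac18$, so I would instead take $A$ to be a union of two short arithmetic progressions,
\[
A=\bigl([\alpha p,\,\alpha p+\beta p]\cup [(\alpha+\gamma)p,\,(\alpha+\gamma)p+\beta p]\bigr)\cap\Z,
\]
with $\beta=\tfrac{1}{16}$ (so that $|A|\approx p/8$), and shift parameters $\alpha,\gamma$ chosen depending on $m$. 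The hypothesis $4m^2\mid p-1$ provides enough divisibility to take $\alpha,\gamma$ as rationals with denominators dividing $4m^2$, so that the endpoints land exactly in $\Z/p\Z$.

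To verify $m$-sum-freeness, I would analyze $2A$ and $mA$ separately. The sumset $2A$ decomposes as a union of three intervals of length $2\beta p$, positioned at $2\alpha p$, $(2\alpha+\gamma)p$ and $(2\alpha+2\gamma)p$. The dilate $mA\pmod p$ is the union of two scaled arithmetic progressions of common difference $m$ and length $\beta p$; by the three-distance theorem, the gaps between consecutive elements of such a scaled AP in $\Z/p\Z$ take only a few distinct values, so one can identify the "large gaps". The plan is then to choose $(\alpha,\gamma)$ so that each of the three intervals making up $2A$ falls into a large gap of $mA$, thereby forcing $2A\cap mA=\emptyset$.

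The main obstacle is exhibiting admissible choices of $(\alpha,\gamma)$ uniformly for all $m\geq 7$: the gap structure of $mA$ depends delicately on the relative alignment of its two scaled arithmetic progressions (governed by $m\gamma\pmod p$), while the three intervals of $2A$ have a fixed relative spacing equal to $\gamma$. The remark following the lemma (that the constant $\tfrac18$ can be improved by an amount vanishing as $m\to\infty$) suggests that the construction has some slack, so one need not be optimal in positioning the blocks; still, proving uniform feasibility across all $m\geq 7$ is the technical heart of the argument.
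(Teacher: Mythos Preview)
Your proposal is not a proof but a sketch, and more importantly the two-interval approach cannot work once $m$ is moderately large. Consider $m\geq 16$ and $p=4m^2n+1$. If $I$ is any interval of length $L\approx p/16$, then the dilate $m\cdot I$ is an arithmetic progression of difference $m$ and length $L$ in $\Z/p\Z$. Since $m\cdot(4mn)=p-1$, the sequence $0,m,2m,\ldots$ reduced mod $p$ first runs through all $4mn+1$ residues $\equiv 0\pmod m$, then through all $4mn$ residues $\equiv -1\pmod m$, and so on; with $L\approx m^2n/4\geq 4mn+1$ this covers at least the first full run and part of the second, so the sorted points of $m\cdot I$ have every gap of size at most $m$. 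Adding a second interval only shrinks these gaps further. But $2A$ contains the interval $2I_1$ of length $\approx p/8=m^2n/2\gg m$, so it cannot avoid $m\cdot A$. Thus for $m\geq 16$ no choice of $(\alpha,\gamma)$ makes a union of two intervals of total density $1/8$ $m$-sum-free; the obstacle you flag as ``the technical heart'' is in fact insurmountable in this form.

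The paper's construction is quite different and worth internalising: instead of a few long intervals, it takes $A$ to be the set of elements in a single interval $J\subset[0,p-1]$ lying in a \emph{block of residue classes} $[0,\mu]$ modulo $m$ (so $A$ is a union of roughly $m/4$ arithmetic progressions of common difference $m$). The point is that then every element of $2A$ lies in residues $[0,2\mu]\pmod m$, while one checks directly from $p\equiv 1\pmod m$ that $m\cdot J$ lands in residues $[m-\lceil\lambda/2\rceil+1,\,m-1]\pmod m$ for a suitable parameter $\lambda$. Disjointness of $2A$ and $m\cdot A$ then reduces to the arithmetic condition $2\mu+\lceil\lambda/2\rceil\leq m$, and optimising $\lambda\approx m$, $\mu\approx m/4$ gives $|A|\approx p/8$. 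The mechanism that makes this work for all $m$ is precisely the control on residues mod $m$, which your interval construction lacks.
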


\begin{proof}
We identify $\Z/p\Z$ with the interval of integers $[0,p-1]$ with addition mod $p$. Let $\lambda\in [3,m]$ and $\mu\in [0,m-1]$ be integer parameters to be fixed later, and consider the interval
\[
J=\{4mn+1,4mn+2,\ldots,2\lambda mn\}\subset [0,p-1].
\]
We define an $m$-sum-free set $A$ by picking elements from $J$ in appropriate congruence classes mod $m$:
\[
A:=\{x\in J: x\!\!\mod m\in [0,\mu]\}.
\]
Note that the sumset $A+A$ taken in $\mb{Z}$ is a subset of $[0,p-1]$ because $\lambda\leq m$ guarantees $2\max A=4mn\lambda\leq 4m^2n= p-1$. We therefore have $y\in 2A$ $\Rightarrow$ $y\!\!\mod m\in [0,2\mu]$.

Now
\[
J=\bigcup_{i=1}^{\lceil\frac{\lambda}{2}\rceil-2}[(4i)mn+1,4(i+1)mn]\cup [(4(\lceil\tfrac{\lambda}{2}\rceil-1)mn+1,2\lambda mn]\subseteq \bigcup_{i=1}^{\lceil\frac{\lambda}{2}\rceil-1}[(4i)mn+1,4(i+1)mn].
\]
Since $p=4m^2n+1$, we have
$m\cdot [(4i)mn+1,4(i+1)mn]=\{m-i,2m-i,\ldots,4m^2n-i\}$ for $i\in [1,\lceil\frac{\lambda}{2}\rceil-1]$, meaning $m\cdot J$ is covered by the progressions
\[
U_i = \{m-i,2m-i,\ldots,4m^2n-i\},\quad i\in [1,\lceil\tfrac{\lambda}{2}\rceil-1].
\]
For $A$ to be $m$-sum-free it suffices to ensure that $2A\cap (m\cdot J)=\emptyset$, and for this it suffices for $\lambda$ and $\mu$ to satisfy $2\mu\leq m-(\lceil\frac{\lambda}{2}\rceil-1)-1$, that is,
\[
2\mu+\lceil\tfrac{\lambda}{2}\rceil \leq m.
\]
We also have $|A|=(\mu+1)\,\frac{|J|}{m}=2n(\mu+1)(\lambda-2)$, so
\[
\frac{|A|}{p-1} = \frac{(\mu+1)(\lambda-2)}{2m^2}.
\]

Suppose $m\equiv 0\mod 4$, so $m\geq 8$. Considering $\lambda=m$ and $\mu=\frac{m}{4}$ yields $\frac{|A|}{p-1} = \frac{(\mu+1)(\lambda-2)}{2m^2}=\frac{m^2+2m-8}{8m^2}$, which is at least $\frac{1}{8}$ for $m\geq 4$.

Suppose $m\equiv 1\mod 4$, so $m\geq 9$. Considering $\lambda=m$ and $\mu=\frac{m-1}{4}$ yields $\frac{|A|}{p-1} =\frac{(\mu+1)(\lambda-2)}{2m^2}=\frac{m^2+m-6}{8m^2}$, which is at least $\frac{1}{8}$ for $m\geq 6$. We remark that, taking $\lambda=m-3$ and $\mu=\frac{m+3}{4}$ instead yields $\frac{|A|}{p-1} =\frac{(\mu+1)(\lambda-2)}{2m^2}=\frac{m^2+2m-35}{8m^2}$, which is slightly better for larger values of $m$.

Suppose $m\equiv 2\mod 4$, so $m\geq 10$. In this case, we will modify the above construction with $\lambda=m-1$ and $\mu=\frac{m-2}{4}$. For these  parameters, we have $\lceil\frac{\lambda}{2}\rceil-1=\frac{m-2}{2}$,  \ $2(\mu+1)=\frac{m+2}{2}=m-\frac{m-2}{2}$, and
\begin{eqnarray}\label{tail}
m\cdot [(4(\lceil\tfrac{\lambda}{2}\rceil-1)mn+1,2\lambda mn] & = & m\cdot [2(m-2)mn+1,2(m-1)mn]\nonumber\\
& = & \{m-\tfrac{m-2}{2},2m-\tfrac{m-2}{2},\ldots,2m^2n-\tfrac{m-2}{2}\},
\end{eqnarray}
which is the subset of $m\cdot J\subseteq [0,p-1]$ congruent to $m-\frac{m-2}{2}$ modulo $m$.
Let $B=\{tm+\frac{m+2}{4}:\; mn\leq t\leq 2(m-1)n-1\}$ and set $A'=A\cup B$. Since $mn \leq t\leq 2(m-1)n-1$, we have $B\subseteq J$, while $2B=\{2m^2n+m-\frac{m-2}{2},2m^2n+2m-\frac{m-2}{2},\ldots,4(m-1)mn-m-\frac{m-2}{2}\}$, which is disjoint from the set in \eqref{tail}. Since $A+B$ is also disjoint from $m\cdot J$, it follows that $2A'\cap (m\cdot J)=\emptyset$, so $A'$ is $m$-sum-free. We have $\frac{|A'|}{p-1}=\frac{|A|+|B|}{p-1}=\frac{2n(\mu+1)(\lambda-2)+(m-2)n}{p-1}
=\frac{m^2+m-10}{8m^2}$, which is at least $\frac18$ for $m\geq 10$. We remark that, taking $\lambda=m-2$ and $\mu=\frac{m+2}{4}$ in the original construction instead yields $\frac{|A|}{p-1} =\frac{(\mu+1)(\lambda-2)}{2m^2}=\frac{m^2+2m-24}{8m^2}$, which is slightly better for larger values of $m$.

Suppose $m\equiv 3\mod 4$, so $m\geq 7$.
We  modify the original construction with $\lambda=m$ and $\mu=\frac{m-3}{4}$. For these  parameters, we have $\lceil\frac{\lambda}{2}\rceil-1=\frac{m-1}{2}$,  \ $2(\mu+1)=\frac{m+1}{2}=m-\frac{m-1}{2}$, and  \begin{align*}m\cdot [(4(\lceil\tfrac{\lambda}{2}\rceil-1)mn+1,2\lambda mn]&=m\cdot [2(m-1)mn+1,2m^2n]
\\&=\{m-\frac{m-1}{2},2m-\frac{m-1}{2},\ldots,2m^2n-\frac{m-1}{2}\},\end{align*} which is the subset of $m\cdot J\subseteq [0,p-1]$ congruent to $m-\frac{m-1}{2}$ modulo $m$.
Let $B=\{tm+\frac{m+1}{4}:\; mn\leq t\leq 2mn-1\}$ and set $A'=A\cup B$. Since $mn \leq t\leq 2mn-1$, we have $B\subseteq J$, while $2B=\{2m^2n+m-\frac{m-1}{2},2m^2n+2m-\frac{m-1}{2},\ldots,4m^2n-m-\frac{m-1}{2}\}$. Thus, similarly to the previous case, $2A'\cap (m\cdot J)=\emptyset$, so $A'$ is $m$-sum-free. We have $\frac{|A'|}{p-1}=\frac{|A|+|B|}{p-1}=\frac{2n(\mu+1)(\lambda-2)+mn}{p-1}
=\frac{m^2+m-2}{8m^2}$, which is at least $\frac18$ for $m\geq 2$. We remark that, taking $\lambda=m-1$ and $\mu=\frac{m+1}{4}$ in the original construction instead yields $\frac{|A|}{p-1} =\frac{(\mu+1)(\lambda-2)}{2m^2}=\frac{m^2+2m-15}{8m^2}$, which is slightly better for larger $m$.

In all four cases above, we obtain a set $A$ such that $d_m(\Z/p\Z)\geq \frac{|A|}{p}\geq \frac{|A|}{p-1}(1-\frac{1}{p})\geq \frac18(1-\frac{1}{p})$, and now the claim about the limit follows from the fact that by Dirichlet's Theorem there exist infinitely many primes in the arithmetic progression $\{4m^2n+1:n\geq 1\}$.
\end{proof}

\subsection{Upper bound for $d_m(\Z/p\Z)$}\label{subsec:ubs}\hfill\\
In this subsection we prove Theorem \ref{thm:dm-ub1}, which we restate here for convenience.
\begin{theorem}\label{thm:dm-ub2}
Let $p\geq 80$ be a prime, let $m$ be an integer in $[2,p-2]$, and let $c=c(p)$ be the solution  to the equation $c= \frac{1+3/p}{3+\alpha(c,p)}$, where $\alpha=\alpha(c,p)$ is the parameter in Theorem \ref{thm:main2} with $\eta=c$. Then $d_m(\Z/p\Z)<c$. In particular, $d_m\leq \frac{1}{3.1955}$.
\end{theorem}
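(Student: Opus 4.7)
The approach is by contradiction: suppose there is an $m$-sum-free set $A\subseteq \Z/p\Z$ with $|A|\geq cp$, and set $r=|2A|-2|A|$. Since $m\in[2,p-2]$ gives $\gcd(m,p)=1$, we have $|m\cdot A|=|A|$, and the $m$-sum-free condition $2A\cap m\cdot A=\emptyset$ yields $|A|+|2A|\leq p$, equivalently $|A|\leq (p-r)/3$. This is the single structural fact we get directly from $m$-sum-freeness.

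The key step is to verify the hypotheses of Theorem \ref{thm:main2} with $\eta=c$. The defining equation $c(3+\alpha(c,p))=1+3/p$ rearranges to $(2+\alpha)cp-3=(1-c)p$. Combined with $|A|\geq cp$, this gives
\[
(2+\alpha)|A|-3\;\geq\;(2+\alpha)cp-3\;=\;(1-c)p\;\geq\;p-|A|\;\geq\;|2A|,
\]
so the doubling hypothesis $|2A|\leq (2+\alpha(c,p))|A|-3$ holds; the remaining hypotheses $|A|\geq \eta p$, $|2A|<p$, and $|A|\leq(p-r)/3$ are immediate. Theorem \ref{thm:main2} therefore provides arithmetic progressions $P_A,P_{2A}\subseteq \Z/p\Z$ sharing a common difference $d$, with $A\subseteq P_A$, $|P_A|\leq |A|+r+1$, $P_{2A}\subseteq 2A$, and $|P_{2A}|\geq 2|A|-1$.

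To extract the contradiction, dilate by $d^{-1}$ (preserving $m$-sum-freeness) so that $P_A$ and $P_{2A}$ become intervals. Since $P_{2A}\subseteq 2A$ is disjoint from $m\cdot A$, the set $m\cdot A$ lies in $\Z/p\Z\setminus P_{2A}$, an interval of length at most $p-2|A|+1$. Hence $A$ is also contained in $m^{-1}\cdot(\Z/p\Z\setminus P_{2A})$, an arithmetic progression of common difference $m^{-1}$ and length at most $p-2|A|+1$; since $m\in[2,p-2]$, we have $m^{-1}\not\equiv\pm 1\pmod p$, so this AP is non-parallel to $P_A$. The containment $A\subseteq P_A$ also forces $m\cdot A\subseteq m\cdot P_A$, an AP of common difference $m$ and length $|P_A|$, whose intersection with $P_{2A}$ is limited to images of the $\leq r+1$ "holes" $P_A\setminus A$. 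A combinatorial analysis of this intersection (quantitatively lower-bounding how many points of the AP $m\cdot P_A$ must fall in the long interval $P_{2A}$, using the three-distance theorem to control the gap structure of APs in $\Z/p\Z$) shows the bound $|m\cdot P_A\cap P_{2A}|\leq r+1$ is incompatible with $|A|\geq cp$, producing the contradiction.

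Finally, the asymptotic statement $d_m\leq 1/3.1955$ follows by letting $p\to\infty$ in the defining equation. Since $p\sin(\pi/p)\to\pi$, the equation $c(3+\alpha(c,p))=1+3/p$ tends to $c^*(3+\alpha^*)=1$ with $\alpha^*=-\tfrac{5}{4}+\tfrac{1}{4}\sqrt{9+8\pi c^*/\sin(\pi c^*/3)}$, whose numerical solution is $c^*\approx 1/3.1955$. The hard part is the combinatorial intersection estimate in the contradiction step: once the non-parallel AP structure is in place, the numerics chosen to make the doubling hypothesis of Theorem \ref{thm:main2} saturate precisely at $|A|=cp$ mean one needs a quantitatively sharp analysis of AP intersections in $\Z/p\Z$, rather than any crude pigeonhole bound, to complete the argument.
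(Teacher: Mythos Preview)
Your overall architecture matches the paper's exactly: assume $|A|\geq cp$, use $m$-sum-freeness to get $|A|\leq (p-r)/3$, verify the doubling hypothesis of Theorem~\ref{thm:main2} via the defining equation for $c$, extract the progressions $P_A,P_{2A}$, and then argue that the dilate $m\cdot P_A$ must hit $P_{2A}$ in too many points. Your algebraic verification of $(2+\alpha)|A|-3\geq |2A|$ is clean and correct, and the bound $|m\cdot P_A\cap P_{2A}|\leq |P_A\setminus A|\leq r+1$ is the right one.

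The genuine gap is the step you yourself flag as ``the hard part'': the lower bound on $|m\cdot P_A\cap P_{2A}|$. You gesture at the three-distance theorem, but that is not the tool the argument actually needs, and it is not clear it would yield a bound sharp enough to close the contradiction at $|A|=cp$. The paper isolates this step as a separate lemma (Lemma~\ref{AP-inter}): given progressions $I,J$ of common difference with $|I|=2N-1$, $|J|>(1+\alpha)N-3$, $d\in[2,p-2]$, and $|I\cap(d\cdot J)|\leq \alpha N-2$, one has $N<\frac{p+3}{3+\alpha}$. The proof is a direct counting argument: after normalizing, one partitions $d\cdot J$ into the successive ``wraps'' $U_1,\ldots,U_{s+1}$ it makes around $[0,p-1]$, lower-bounds $s$ by $\lfloor |J|d/p\rfloor$ and $|U_i\cap I|$ by $\lfloor |I|/d\rfloor$, and then shows the resulting inequality in $N,d,p,\alpha$ is infeasible once one also establishes $11\leq d<p/6$ from the same hypotheses. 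This is where the assumptions $\alpha\leq 1/5$ and $p\geq 80$ actually enter (you do not verify $\alpha\leq 1/5$; the paper does so via $\eta\leq 3/8$). Without this lemma, the proof is a sketch: the reduction to an AP-intersection problem is correct, but the intersection estimate is the substance of the theorem, not a routine endgame.
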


The idea of the proof is roughly the following: either an $m$-sum-free set $A$ has doubling constant at least $2+\alpha$, in which case, since $(m\cdot A)\cap 2A=\emptyset$, we have $(3+\alpha)|A|\leq |(m\cdot A)|+|2A|\leq p$ and we are done, or we can apply Theorem \ref{thm:main2}, and thus, working with the two arithmetic progressions provided by the theorem, we reduce the problem essentially to bounding the size that two progressions $I$ and $J$ of equal difference can have if the dilate $m\cdot J$ has small intersection with $I$. Let us begin by establishing this result about progressions.

\begin{lemma}\label{AP-inter}
Let $p\ge 80$ be prime, let $0<\alpha\leq 1/5$, let $d\in [2,p-2]$, and let $N\in\mb{N}$. Let $I$ and $J$ be progressions in $\Z/p\Z$ having the same difference and satisfying $|I|=2N-1$, $|J|>(1+\alpha)N-3$, and $|I\cap (d\cdot J)| \le \alpha N-2$. Then $N<\frac{p+3}{3+\alpha}$.
\end{lemma}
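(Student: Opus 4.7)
The plan is to normalize the setup and then argue via a careful counting analysis of $d\cdot J$ in $\Z/p\Z$.

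First, applying the Freiman automorphism $x \mapsto g^{-1}x$ of $\Z/p\Z$ induced by the common difference $g$ of $I$ and $J$, I reduce to the case $g = 1$, so that $I$ and $J$ are both intervals; after translating I take $I = [0, 2N-2]$, making $\bar I = [2N-1, p-1]$ an interval of size $p - 2N + 1$. By a symmetry argument replacing $d$ with $p - d$ and $I$ with $-I$ (which is still an interval with difference $1$), I may also assume $d \leq p/2$. Moreover, when $N \leq (p+2)/4$ the conclusion is immediate since $(p+2)/4 < (p+3)/(3+\alpha)$ for all $\alpha \leq 1$, so I assume $N > (p+2)/4$; this forces $d \leq p/2 \leq 2N - 1$.

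The heart of the argument is a geometric counting analysis of $d\cdot J$ in the cycle $\Z/p\Z$. The at most $\alpha N - 2$ elements of $I \cap d\cdot J$, removed from $d\cdot J$ in its natural AP order, split $d\cdot J$ into at most $\alpha N - 1$ ``runs'', each a consecutive sub-AP lying entirely in $\bar I$. The assumption $d \leq 2N-1$ is crucial: a cyclic wrap-step $+d$ from $\bar I$ always lands in $[0, d-1]\subseteq I$, terminating the run, so no run wraps within itself. Working per ``phase'' of $d\cdot J$ (a phase being a maximal sub-sequence traversing $\Z/p\Z$ without wrap-around), one observes that the integer-interval spans of runs within a single phase are necessarily disjoint subsets of $\bar I$. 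Summing these contributions over phases should yield a counting inequality relating the total run length $|J| - |I\cap d\cdot J|$, the number of runs, the number of phases, and $|\bar I|$.

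Combining such a counting inequality with the hypotheses $|J| > (1+\alpha)N - 3$ and $|I\cap d\cdot J| \leq \alpha N - 2$, one should obtain, after algebra, a bound of the form $N[d + 2 - (d-1)\alpha] < p + 2$, giving $N < (p+2)/[d + 2 - (d-1)\alpha]$. Since this expression is minimized over $d \geq 2$ at $d = 2$, I get $N < (p+2)/(4-\alpha)$, and a direct computation verifies $(p+2)/(4-\alpha) < (p+3)/(3+\alpha)$ for all $\alpha \leq 1/5$ and $p \geq 80$, yielding the claimed bound.

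The main obstacle will be the precise derivation of the counting inequality in the cyclic setting. The subtlety arises because, while runs within a single phase of $d\cdot J$ have disjoint geometric spans in $\bar I$, runs in different phases can occupy overlapping regions of $\bar I$; hence the number of phases must be carefully controlled in terms of $|J|$ and $d$ to obtain the desired inequality. A separate (easier) analysis would handle the complementary regime $d \geq 2N$, where the gap structure of the AP forces $|I\cap d\cdot J| \leq 1$ automatically and the bound on $N$ follows from combining $|J| > (1+\alpha)N-3$ with $|J|d \leq p$ or its wrapping variant.
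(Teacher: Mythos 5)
Your opening reductions (dilating to common difference $1$, translating, assuming $d\leq p/2$ by replacing $I$ with $-I$, and discarding $N\leq (p+2)/4$) are fine and match the paper's normalization. But the core of your argument is missing, and the inequality you are aiming for is not just unproved --- it is false. You acknowledge that you only establish the span-disjointness of runs \emph{within one phase} and that the multi-phase bookkeeping is ``the main obstacle''; in fact your target bound $N\bigl[d+2-(d-1)\alpha\bigr]<p+2$ is exactly what the single-phase (no wrap-around) computation gives, and it does not survive wrapping. Concretely, take $p\equiv 1 \pmod{10}$ and $d=(p-1)/10$, so $10d\equiv -1 \pmod p$ and $d\cdot J$ (for an interval $J$ of length $L$) is a union of ten intervals of length about $L/10$ anchored at spacing about $p/10$. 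With $\alpha=1/5$, $N\approx 0.09\,p$, $L\approx 1.2N\approx 0.108\,p$, one can place an interval $I$ of length $2N-1\approx 0.18\,p$ so that it swallows exactly one of the ten blocks, giving $|I\cap(d\cdot J)|\approx 0.011\,p\leq \alpha N-2$; all hypotheses of the lemma hold, yet $N\bigl[d+2-(d-1)\alpha\bigr]\approx 0.8\,dN\approx 0.007\,p^2\gg p+2$. So the per-$d$ bound you plan to minimize at $d=2$ cannot be derived, and the ``worst case $d=2$'' picture is upside down: under the counterfactual assumption $N\geq\frac{p+3}{3+\alpha}$ the paper shows small $d$ is \emph{impossible} ($d\geq 11$ is forced), and the delicate range is intermediate $d$. (A further minor slip: for $d\geq 2N$ it is not automatic that $|I\cap (d\cdot J)|\leq 1$, since elements from different wrap-around passes can both land in $I$; after your reduction that regime does not arise, but the claimed ``easier analysis'' there is also wrong as stated.)

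If you carry out your multi-phase counting correctly, the right-hand side acquires a factor equal to the number of phases, roughly $\frac{|J|d}{p}+2$, and what comes out is essentially the paper's inequality, namely a bound of the shape $N<\frac{1}{2(1+\alpha)}\bigl(d(1+\alpha)+6+\frac{2p}{d}+\alpha p\bigr)$; this is the same double counting as the paper's, which counts in the opposite direction (each full pass of $d\cdot J$ around $\Z/p\Z$ must hit $I$ in at least $\lfloor |I|/d\rfloor$ points, and there are $s\gtrsim |J|d/p$ full passes, so $\alpha N-2\geq s\lfloor |I|/d\rfloor$). That bound alone does not yield the conclusion for all $d$: the paper must argue by contradiction from $N\geq\frac{p+3}{3+\alpha}$, first show $s\geq 1$, deduce $d>2/\alpha$ (hence $d\geq 11$) and $d<\frac{\alpha}{1+\alpha}p<p/6$, and only then reach the numerical contradiction using $p\geq 80$. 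Your proposal contains neither the corrected counting inequality nor these auxiliary restrictions on $d$, so as it stands there is a genuine gap at the heart of the proof.
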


\begin{proof}
First note that, without loss of generality, we can assume $d\le \frac{p-1}{2}$, since if the lemma is proved with this assumption, then, given $d> \frac{p-1}{2}$, we can multiply by $-1$ and apply the lemma with the intervals $-I$ and $J$. Let us proceed by contradiction supposing that there exists some $N$ (along with $p$, $d$, $\alpha$, $I$ and $J$) such that the hypotheses of the lemma are satisfied but $N\ge \frac{p+3}{3+\alpha}$. Note that the supposed properties of $I$ and $J$ are conserved if we dilate by the inverse of their difference mod $p$ and if we translate, replacing $I$ by $I+dz$ and $J$ by $J+z$. It follows that, identifying $\Z/p\Z$ with the integers $[0,p-1]$ with addition mod $p$, we can assume that $I=[p-|I|,p-1]$ and $J=x+[0,|J|-1]\mod p$ for some $x\in [0,p-1]$.

We claim that we can assume without loss of generality that
\be\label{alignhyp}
d\cdot x\in [0,d-1]\mod p.
\ee
Indeed, if this does not hold then either $d\cdot x\in [d, p-|I|+d-1]\mod p$, or $d\cdot x\in [p-|I|,p-1]$. If the former holds, then $d\cdot(x-1)\notin I\mod p$, so the interval $J'=(x-1)+[0,|J|-1]$ satisfies the hypotheses with $|I\cap (d\cdot J')|\leq |I\cap (d\cdot J)|$. On the other hand, if $d\cdot x\in [p-|I|,p-1]$, then letting $J'=(x+1)+[0,|J|-1]$ we have $d\cdot x\in I\cap (d\cdot J)$ and $d\cdot x\not\in I\cap (d\cdot J')$, so this interval $J'$ satisfies the hypotheses with $|I\cap (d\cdot J')|\leq |I\cap (d\cdot J)|$. In either case, by repeatedly shifting the interval $J$, we eventually obtain \eqref{alignhyp}.

Given \eqref{alignhyp}, we may partition  $d\cdot J$ into successive progressions $U_i$ (with difference $d$) for $i\in [1,s+1]$, such that $U_i=(\min U_i+d\Z)\cap [0,p-1]$  with $\min U_i\in [0,d-1]$ for $i\in [1,s]$, and $U_{s+1}$ is either empty or consists of an initial portion of $(\min U_{s+1}+d\Z)\cap [0,p-1]$ with $\min U_{s+1}\in [0,d-1]$.
Then $|U_i\cap I|\geq \left\lfloor\frac{|I|}{d}\right\rfloor$ for $i\in [1,s]$. It follows that $|(d\cdot J) \cap I|\ge s\left\lfloor\frac{|I|}{d}\right\rfloor$, whence
\begin{equation}\label{eqn:intervals-lemma-main}
\alpha N-2 \ge s\left\lfloor\frac{|I|}{d}\right\rfloor.
\end{equation}
Now, as $d\cdot x\in[0,d-1] \mod p$, each $U_i$ with $i\leq s$ starts in $[0,d-1]$ and ends in $[p-d,p-1]$, so $s$ is at least the number of consecutive intervals of length $p$ that fit inside $[0,|J|d-1]$:
\begin{equation}\label{eqn:s-bound}
s \geq \left\lfloor\frac{|J|d}{p}\right\rfloor > \frac{((1+\alpha)N-3)d}{p}-1.
\end{equation}
\noindent Substituting this lower bound for $s$ in \eqref{eqn:intervals-lemma-main}, as well as the bound $\left\lfloor \frac{|I|}{d}\right\rfloor \ge \frac{|I|}{d}-\frac{d-1}{d} = \frac{2N}{d}-1$, and expanding the resulting product, we obtain $\alpha N-2 > \frac{2(1+\alpha)}{p}N^2-\left( \frac{(1+\alpha)d}{p}+\frac{6}{p}+\frac{2}{d} \right)N+1+\frac{3d}{p}$. We group all terms involving $N$ on the right side, we note that the other terms grouped on the left side amount to a negative number, and we multiply through by $\frac{p}{2(1+\alpha)N}$, to deduce that
\begin{equation}\label{eqn:N-bound}
N < \frac{1}{2(1+\alpha)}\left( d(1+\alpha)+6+\frac{2p}{d}+\alpha p\right).
\end{equation}
We want to obtain a contradiction from this, using that $N\ge \frac{p+3}{3+\alpha}$. To that end, using the bounds $2\le d \le \frac{p-1}{2}$ on the right hand side of \eqref{eqn:N-bound} is not enough. However, we shall now show that we can assume $11\le d < p/6$, which will be enough.

First, we claim that $s\geq 1$. Indeed, otherwise $|J|\leq |(d\cdot J) \cap I|+ |(d\cdot J) \cap [0,p-|I|-1]| \le \alpha N-2+\left\lceil\frac{p-|I|}{d}\right\rceil$. Using the assumptions on $|I|, |J|$, and $d\ge 2$, we deduce  that $N< \frac{p+2d}{d+2}\leq \frac{p}{4}+2$. This, combined with our assumptions $N\ge(p+3)/(3+\alpha)$ and $\alpha<1/5$, contradicts $p\geq 80$.

Since $s\ge 1$, \eqref{eqn:intervals-lemma-main} yields $\alpha N -2  \ge \floor{|I|/d}\ge \frac{2N}{d}-1$. It follows  that $(\alpha N-1)d\geq2N >0$. Hence $\alpha N-1>0$ and $d\geq\frac{2N}{\alpha N-1}>\frac{2}{\alpha}$, whence  $d\geq 11$ follows in view of $\alpha\leq \frac{1}{5}$.

Note that $\floor{|I|/d}\ge 1$, for  otherwise $2N=|I|+1<d+1\leq \frac{p+1}{2}$, contradicting our assumptions $N\geq \frac{p+3}{3+\alpha}$ and $\alpha\leq 1/5$. Combining this with   \eqref{eqn:intervals-lemma-main} and \eqref{eqn:s-bound}, we obtain
$\alpha N-2 > \frac{((1+\alpha)N-3)d}{p}-1$, which means $d\le \left(\frac{\alpha N-1}{(1+\alpha)N-3}\right)p<\frac{\alpha}{1+\alpha} p$. As $\alpha\le 1/5$ we conclude that $d<p/6$.

Using now the bounds $11\le d < p/6$ in \eqref{eqn:N-bound}, and the assumption that $N\ge \frac{p+3}{3+\alpha}$, we deduce that $\frac{p+3}{3+\alpha} < \frac{p}{12}+\frac{p}{11(1+\alpha)}+\frac{\alpha p}{2(1+\alpha)}+\frac{3}{1+\alpha}$, implying $\frac{1}{3.2} < \frac{1}{12}+\frac{1}{11}+\frac{1}{10}+\frac{3}{p}$, contradicting $p\geq 80$.
\end{proof}

\begin{remark}
\textup{It is possible to extend the validity of Lemma \ref{AP-inter} to all primes $p\geq 5$, at the cost of lengthening the proof with several technicalities. The lemma has potential generalizations that seem of independent interest, though we do not need to pursue them for our purposes in this paper. For instance, the anonymous referee raised the question of which values of coefficients $\alpha,\beta$ and which functions $f(\alpha,\beta), g(\alpha,\beta) >0$ ensure that the following statement holds: if $I,J$ are arithmetic progressions in $\mb{Z}/p\mb{Z}$ with common difference and respective sizes $\alpha N +a$, $\beta N+b$, then $N> f(\alpha,\beta)p$   implies $|I\cap (d\cdot J)|> g(\alpha,\beta)N$.}
\end{remark}

We can now prove the main result.

\begin{proof}[Proof of Theorem \ref{thm:dm-ub2}]
Let $A\subseteq \Z/p\Z$ be an $m$-sum-free subset of maximum size, with $|A|=\eta p$, and let $\alpha=\alpha(\eta,p)=-\frac{5}{4} + \frac{1}{4}\sqrt{9+8\,\eta\, p\sin(\pi/p)/\sin(\pi\eta/3)}$. Assume by contradiction that $\eta\geq c$. Then, since $x\mapsto \frac{1+3/p}{3+\alpha(x,p)}$ is decreasing in $x\in (0,1)$ and $c=\frac{1+3/p}{3+\alpha(c,p)}$, we deduce that $\eta\geq c\geq  \frac{1+3/p}{3+\alpha}$, whence
\be\label{A-big}
|A|\geq \frac{p+3}{3+\alpha}>1.
\ee
As noted at the start of  the proof of Theorem \ref{thm:main2}, $\alpha(\eta,p)$ is increasing for $\eta\in (0,1)$ with $p\sin(\pi/p)\rightarrow \pi$ monotonically.
Since $2A$ and $m\cdot A$ are disjoint, we have $|2A|\leq p-|A|$, while $|2A|\geq 2|A|-1$ by the Cauchy-Davenport Theorem. Thus $2|A|-1\leq |2A|\leq p-|A|$, implying $|A|\leq \frac{p+1}{3}$ and $\eta\leq \frac{p+1}{3p}$. Since $p\geq 80$, we have $\eta\leq \frac{3}{8}$ and $\alpha\leq -\frac{5}{4}+\frac14\sqrt{9+3\pi/\sin(\pi/8)}<0.2$.

Let $|2A|=2|A|+r$. Since $A$ is $m$-sum-free, the sets $2A$ and $m\cdot A$ are disjoint, which implies that $|2A|<p$ (as $A$ is nonempty) and that $p\geq |2A|+|m\cdot A|=3|A|+r$. Thus $$|A|\leq \frac{p-r}{3}\quad\und\quad |2A|=2|A|+r\leq \frac{2p+r}{3}.$$
Since $|2A|<p$, the Cauchy-Davenport Theorem implies $r\geq -1$.

If $|2A|=2|A|+r>(2+\alpha)|A|-3$, then $r>\alpha|A|-3$, in which case $|A|\leq \frac{p-r}{3}<\frac{p-\alpha|A|+3}{3}$, which contradicts \eqref{A-big}. Therefore $|2A|\leq (2+\alpha)|A|-3$ and $r\leq \lfloor \alpha|A|-3\rfloor$. We can now apply Theorem \ref{thm:main2}. As a result, there are arithmetic progressions $P_A$ and $P_{2A}$ with common difference $g$ such that $A\subseteq P_A$, $P_{2A}\subseteq 2A$, $|P_A|= \lfloor(1+\alpha)|A|-2\rfloor\leq p$, and $|P_{2A}|= 2|A|-1$. It follows that $P:=m\cdot P_A$ is an arithmetic progression with difference $mg\neq \pm g$ such that
\[
|P\cap P_{2A}|\leq |P\cap 2A|\leq |P_A\setminus A|\leq \alpha|A|-2.
\]
We can therefore apply Lemma \ref{AP-inter} with $N=|A|$ (as $\alpha<0.2$), deducing that $|A|<\frac{p+3}{3+\alpha}$, a contradiction. Therefore we must have $\eta< c$, so $d_m(\Z/p\Z)<c$, which proves the first claim in the theorem. Taking the limit of $c$ as $p\to\infty$, we deduce that $d_m\leq t$, where $t$ is defined by the equation $t=F(t)$ for the function $F(t)=(\frac{7}{4} + \frac{1}{4}\sqrt{9+8\,t\, \pi/\sin(\pi t/3)})^{-1}$. Since $F$ is monotonically decreasing and satisfies $F(3.1955^{-1})<3.1955^{-1}$, we must have $t<3.1955^{-1}$, which proves the second claim in the theorem.
\end{proof}

\subsection{The structure of large sum-free sets in  $\Z/p\Z$}\label{subsec:sumfree}\hfill\smallskip\\
In this final part of the paper, we apply Theorem \ref{thm:main2} to obtain the following improvement of Theorem \ref{thm:D&L}.

\begin{theorem}
\label{thm-sumfree} Let $p\geq 14\, 000$ be prime and let $A\subseteq \Z/p\Z$ be sum-free with $|A|\geq (0.313)p$. Then $m\cdot A\subseteq [\,|A|,p-|A|\,]\subseteq \Z/p\Z$ for some $m\in [1,p-1]$.
\end{theorem}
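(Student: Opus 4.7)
The plan is to combine Theorem \ref{thm:main2} with an analysis of the sum-free constraint to place $A$ inside a dilate of $[|A|,p-|A|]$.

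First, I would verify the applicability of Theorem \ref{thm:main2}. Sum-freeness of $A$ gives $A\cap 2A=\emptyset$, hence $|A|+|2A|\leq p$ and so $|A|\leq(p-r)/3$ where $r=|2A|-2|A|\geq -1$ (by Cauchy--Davenport). If the bound $|2A|\leq(2+\alpha)|A|-3$ were to fail, then $r>\alpha|A|-3$, which together with $r\leq p-3|A|$ would give $(3+\alpha)|A|<p+3$, i.e.\ $|A|<(p+3)/(3+\alpha)$, contradicting $|A|\geq 0.313\,p$ for $p\geq 14\,000$ (with $\alpha=\alpha(0.313,p)$ from Theorem \ref{thm:main2}, a direct computation gives $\alpha>0.195$ and $0.313(3+\alpha)>1+3/p$ in this range).

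Thus Theorem \ref{thm:main2} yields arithmetic progressions $P_A,P_{2A}\subseteq\Z/p\Z$ of a common difference $d\neq 0$ with $A\subseteq P_A$, $P_{2A}\subseteq 2A$, $|P_A|\leq|A|+r+1\leq(1+\alpha)|A|-2$, and $|P_{2A}|\geq 2|A|-1$. Setting $m=d^{-1}$ and $B=mA$, dilation preserves sum-freeness, so $B$ is sum-free and contained in the cyclic interval $I_A:=mP_A$ of length $L:=|P_A|$; the cyclic interval $I_{2A}:=mP_{2A}$ of length at least $2|A|-1$ is contained in $2B$, and $B\cap I_{2A}=\emptyset$.

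Next, I would choose a non-wrapping representation $I_A=[a,b]$ (possible since $L<p/2$), noting that $a,b\in B$ as the extreme elements of the minimal AP containing $B$. Then $2B\subseteq[2a,2b]$ and $I_{2A}\subseteq[2a,2b]$, the latter of length $2L-1$. Writing $I_{2A}=[2a+s,2b-s']$ with $s,s'\geq 0$ and $s+s'\leq 2(L-|A|)$, the constraint $B\cap I_{2A}=\emptyset$ combined with $|I_A\setminus B|\leq L-|A|$ forces, in the generic case, the intersection $I_A\cap I_{2A}=[2a+s,b]$ to have size $L-a-s\leq L-|A|$, giving $a+s\geq|A|$, and symmetrically $b-s'\leq p-|A|$.

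The hard part will be eliminating the slacks $s,s'$ to conclude the sharp inequalities $a\geq|A|$ and $b\leq p-|A|$, which yield $B\subseteq[|A|,p-|A|]$ and hence the theorem. For this I would use that the canonical construction of $I_{2A}$ coming from the $3k-4$ Theorem in $\Z$ (via the Freiman isomorphism underlying Theorem \ref{thm:main2}) places $I_{2A}$ so that it contains the extreme sums $2a$ and $2b$, forcing $s=s'=0$. Handling the mod-$p$ wrap-around cases when $B$ has density close to $1/3$ requires care, but the hypothesis $p\geq 14\,000$ ensures that the additive $O(1)$ corrections (such as the $-2$ in $|P_A|\leq(1+\alpha)|A|-2$) are dominated by the main estimates.
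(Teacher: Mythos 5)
Your reduction to Theorem \ref{thm:main2} is fine and matches the paper: sum-freeness gives $|A|\leq\frac{p-r}{3}$, the density hypothesis rules out $|2A|>(2+\alpha)|A|-3$, and one gets $A\subseteq P_A$ with $|P_A|\leq(1+\alpha)|A|-2$ after dilating the common difference to $1$. The genuine gap is exactly at the step you flag as the hard part: eliminating the slacks $s,s'$. Your proposed mechanism --- that the long progression $I_{2A}\subseteq 2B$ produced by the $3k-4$ Theorem can be placed so as to contain the extreme sums $2a$ and $2b$ --- is false. Since $I_{2A}$ is an interval inside $[2a,2b]$, containing both $2a$ and $2b$ would force $2B\supseteq[2a,2b]$, i.e.\ $|2B|\geq 2L-1$, which generally exceeds $|2B|=2|A|+r$ (one only knows $L\leq|A|+r+1$, not $L\leq|A|+\frac{r+1}{2}$). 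Concretely, for $B=[0,n-1]\cup\{n+t\}$ with $2\leq t\leq n-1$ one has $2B=[0,2n+t-1]\cup\{2n+2t\}$, and no interval of length $2|B|-1$ inside $2B$ contains $2\max B$. Moreover the slack is not an $O(1)$ correction: $s+s'\leq 2(L-|A|)$ can be as large as roughly $2(\alpha|A|-2)\approx 0.39|A|$, i.e.\ linear in $|A|$, so it cannot be absorbed by taking $p$ large. Your ``generic case'' intersection computation also presumes one particular relative position of $I_{2A}$ and $I_A$ modulo $p$; other configurations (e.g.\ $I_{2A}$ meeting $I_A$ only after wrapping, or barely at all when $r$ is small) give no lower bound on $a$ at all. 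The underlying difficulty is structural: the sumset carries no canonical anchor, so knowing that some long interval of $2B$ avoids $B$ does not locate $B$ relative to $0$, which is what the conclusion $m\cdot A\subseteq[\,|A|,p-|A|\,]$ (an interval that cannot be translated) requires.

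The paper circumvents this by working with the difference set instead of the sumset, where symmetry provides the missing anchor. Having rectified $A$ inside a short progression of difference $1$ (so that $\gcds(\psi(A))=1$), sum-freeness gives $(A-A)\cap A=\emptyset$, hence $|A-A|\leq p-|A|\leq 3|A|-4$ for the given density, so Theorem \ref{thm-3k4Z} applies to $\psi(A)+(-\psi(A))$. By the remark preceding Theorem \ref{thm-3k4Z}, the long progression $P_{A-A}\subseteq A-A$ may be replaced by $P_{A-A}\cup(-P_{A-A})$, which is again a progression in $A-A$ and is symmetric about $0$; thus $[-(|A|-1),|A|-1]\subseteq A-A$ with \emph{no slack}. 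Since $A$ is disjoint from $A-A$, this immediately yields $A\subseteq[\,|A|,p-|A|\,]$ after the dilation. To repair your argument you would need an analogous anchoring device for $2B$, which the $3k-4$ Theorem does not supply; switching to $A-A$ and exploiting its central symmetry is the key idea your proposal is missing.
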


\begin{proof}
By hypothesis, $|A|=\eta p>0$ with $\eta \geq 0.313$. Set $|2A|=2|A|+r$. Since $A$ is sum-free, we have $(2A)\cap A=\emptyset$, implying $2|A|+r=|2A|\leq p-|A|<p$, whence $|A|\leq \frac{p-r}{3}$.   As in the proof of Theorem \ref{thm:dm-ub2}, let $\alpha=\alpha(\eta,p)=-\frac{5}{4} + \frac{1}{4}\sqrt{9+8\,\eta\, p\sin(\pi/p)/\sin(\pi\eta/3)}$. Observe that $\alpha(\eta,p)$ is increasing as a function of $p\geq 2$ and $\eta\in (0,1)$, so $\alpha=\alpha(\eta,p)\geq \alpha(0.313,14000)\geq \beta:=0.195579$. If $|2A|> (2+\beta)|A|-3$, then we have $(2+\beta)|A|-3<|2A|\leq p-|A|$, implying $(0.313)p\leq |A|<\frac{p+3}{3+\beta}$, and thus $p\leq 13\, 875$, which is contrary to hypothesis. Therefore we instead conclude that $|2A|\leq(2+\beta)|A|-3\leq (2+\alpha)|A|-3$, allowing us to apply Theorem \ref{thm:main2} to conclude that there is an arithmetic progression $P\subseteq \Z/p\Z$ with $A\subseteq P$ and $|P|\leq |A|+r+1$. By dilating $A$ by the inverse of the difference of the progression $P$, we can assume without loss of generality that $P$ has difference $1$. Since $2|A|+r=|2A|\leq (2+\beta)|A|-3$, we have $r\leq \beta|A|-3$, and thus $|P|\leq |A|+r+1\leq (1+\beta)|A|-2$. The bound $|A|\leq (p+1)/3$ given by the Cauchy-Davenport theorem then implies $|P|\leq (1+\beta)(p+1)/3< \frac{p+1}{2}$. It follows that the sumset $A+A$ is rectifiable.

Let $\psi:A+A\rightarrow \Z$ be the associated Freiman isomorphism,  with coordinate map $\psi_A:A\rightarrow \Z$. Note that the map of the form $a_0+sg\mapsto s$ involved in the definition of $\psi_A$ (see the remarks before Lemma \ref{lemma-dual}) can be assumed to be just a translation (since the element $g$ here, being the difference of $P$, is assumed to be 1). By slight abuse of notation, we drop the subscript from $\psi_A$, denoting this map also by $\psi$. Let $\psi':A-A\rightarrow \Z$ be the Freiman isomorphism defined by $\psi'(x-y)=\psi(x)-\psi(y)$ for $x,\,y\in A$ (see the remarks after Theorem \ref{thm-3k4Z}). Since $|P|\leq |A|+r+1\leq 2|A|-2$ implies  $|A|>\frac{|P|+1}{2}$, we are assured that $A$ contains two consecutive elements in $P$, whence $\gcd^*(\psi(A))=1$. Since $A$ is sum-free, we have $(A-A)\cap A=\emptyset$, and thus $|A-A|\leq p-|A|$.
Since $A-A\cong \psi(A)-\psi(A)$, we have $|\psi(A)-\psi(A)|=|A-A|$ and $|\psi(A)|=|A|$. As a result, if $|\psi(A)-\psi(A)|\geq 3|\psi(A)|-3$, then $p-|A|\geq |A-A|=|\psi(A)-\psi(A)|\geq3|\psi(A)|-3=3|A|-3$, implying $(0.313)p\leq |A|\leq \frac{p+3}{4}$, contradicting that $p\geq 14\, 000$. Therefore $|\psi(A)-\psi(A)|\leq 3|\psi(A)|-4$, allowing us to apply the $3k-4$ Theorem (Theorem \ref{thm-3k4Z}) with the sets $\psi(A)$, $-\psi(A)$. This, together with the remarks in the paragraph above Theorem \ref{thm-3k4Z}, implies that $[-(|A|-1),(|A|-1)]\subseteq \psi(A)-\psi(A)$. Hence $[-(|A|-1),(|A|-1)]\subseteq \psi'(A-A)$, and given the form of $\psi'$ , it follows that in $\Z/p\Z$ we have $[-(|A|-1),(|A|-1)]\subseteq  A-A$. Since $A$ being sum-free implies $(A-A)\cap A=\emptyset$, this forces $A\cap [-(|A|-1),(|A|-1)]=\emptyset$, i.e., $A\subseteq [\, |A|,p-|A|\, ]$, which completes the proof.
\end{proof}

\noindent \textbf{Acknowledgements.} We are very grateful to Tomasz Schoen for providing the original idea of the construction in Lemma \ref{Schoen} and for useful remarks. We also thank very much the anonymous referee for insightful comments that helped us to improve this paper.

\end{document}